\documentclass[12pt,reqno]{amsart}
 \usepackage[dvips]{epsfig}
 \usepackage{comment}
 \usepackage{enumerate}
 \usepackage{amsgen, amstext,amsbsy,amsopn, amsthm, amsfonts,amssymb,amscd,amsmat
 h,euscript,enumerate,url,verbatim,calc,xypic}
 
\oddsidemargin -0.2in
\evensidemargin -0.2in
\textwidth6.2in
\textheight 22cm

 \usepackage{latexsym}
 \usepackage{graphics}
 \usepackage{color}

\newcommand{\Pic}{{\rm Pic\,}}

\newcommand{\cc}{{\mathfrak c}} 
\newcommand{\im}{{\rm im\,}}
\newcommand{\proset}{\,\mathrel{\lower 4pt\hbox{$\scriptscriptstyle/$}
\mkern -14mu\subseteq }\,} 
 
 \newtheorem{theorem}{Theorem}[section]
  \newtheorem{corollary}[theorem]{Corollary}
 \newtheorem{lemma}[theorem]{Lemma}
 \newtheorem{proposition}[theorem]{Proposition}

\newtheorem{remark}[theorem]{Remark}

 \newtheorem*{thm1}{Theorem{\bf $\,$ \ref{2.10}}}
\newtheorem*{thm2}{Theorem{\bf $\,$ \ref{LP1}}}
\newtheorem*{thm3}{Theorem{\bf $\,$ \ref{2.11}}}

\usepackage{amsmath}

\begin{document}
 \title{Subintegrality, Invertible Modules and Laurent Polynomial Extensions}
 \author{Vivek Sadhu}
 
 \address{Department of Mathematics, Indian Institute of Technology 
Bombay, Powai, Mumbai 400076, India} \email{viveksadhu@math.iitb.ac.in}

 \maketitle
 \begin{abstract}
 Let $A\subseteq B$ be a commutative ring extension. Let $\mathcal I(A, B)$ be the multiplicative group of invertible $A$-submodules of $B$. In this article, we extend a result of Sadhu and Singh by finding a necessary and sufficient condition on an integral birational extension $A\subseteq B$ of integral domains with $\dim A\leq 1$, so that the natural  map $\mathcal I(A,B) \rightarrow  \mathcal I (A [X, X^{-1}],B [X, X^{-1}])$ is an isomorphism. In the same situation, we show that if $\dim A\geq 2$ then the condition is necessary but not sufficient. We also discuss some properties of the cokernel of the natural map $\mathcal I(A,B) \rightarrow  \mathcal I (A [X, X^{-1}],B [X, X^{-1}])$ in the general case.
  
 \end{abstract}
 
 \textbf{Keywords}: Subintegral extensions, Seminormal rings, Invertible modules
 
 \textbf{2010 Mathematics Subject Classification:} 13B02, 13F45
 
 \section*{Introduction}
In \cite{rs}, Roberts and Singh have introduced the group $\mathcal I(A, B)$ to generalize a result of Dayton. The relation between the group $\mathcal I(A, B)$ and subintegral extensions has been investigated by Reid, Roberts and Singh in a series of papers. Recently in \cite{ss}, Sadhu and Singh have proved that $A$ is subintegrally closed in $B$ if and only if the canonical map $\mathcal I(A,B) \rightarrow  \mathcal I (A [X],B [X])$ is an isomorphism. It is easy to see that the map is injective and that $\mathcal I(A[X], B[X])= \mathcal I(A, B)\oplus N\mathcal I(A, B)$, where $N\mathcal I(A, B)$ denotes the kernel of the map $\mathcal I(A[X], B[X]) \stackrel{X\mapsto 0}\rightarrow \mathcal I(A, B)$. So the result of \cite{ss}, just mentioned,  amounts to saying that  $N\mathcal I(A, B)= 0$ if and only if $A$ is subintegrally closed in $B$.

The primary goal of this paper is to extend the result of Sadhu and Singh in \cite{ss} just mentioned above by finding a necessary and sufficient condition on $A\subseteq B$, so that the natural  map $\mathcal I(A,B) \rightarrow  \mathcal I (A [X, X^{-1}],B [X, X^{-1}])$ is an isomorphism. It is easy to see that the map  $\mathcal I(A, B)\rightarrow \mathcal I(A[X, X^{-1}],B[X, X^{-1}])$ is always injective (see Lemma \ref{2.2}). Thus the problem reduces to the investigation of conditions for the cokernel of the above map to be zero. This cokernel will be denoted by $M\mathcal I(A, B)$. The secondary goal will be to investigate properties of the  cokernel $M\mathcal I(A, B)$ in the general case.

In Section 1, we mainly give basic definitions and  notations.

In Section 2, we discuss conditions on $A\subseteq B$ under which the  map $\mathcal I(A, B)\rightarrow \mathcal I(A[X, X^{-1}],B[X, X^{-1}])$ is an isomorphism. We are able to prove some results in the situation when $A\subseteq B$ is an integral birational extension of domains. First, if $\dim A\leq 1$ then by using a result of Onoda-Yoshida (\cite{Oy}, Theorem 1.13), we prove the following 
\begin{thm3}
Let $A\subseteq B$ be an integral, birational extension of domains with $\dim A\leq 1$. Then $M\mathcal I(A, B)=0$ if and only if $A$ is subintegrally closed in $B$ and $A\subseteq B$ is anodal.
\end{thm3}
For higher dimension, we show that the above conditions are necessary but not sufficient. More precisely, we prove the following
\begin{thm1}
  Let $A\subseteq B$ be an integral, birational extension of domains. Suppose $M\mathcal I(A, B)=0$. Then  $A$ is subintegrally closed in $B$ and $A\subseteq B$ is anodal.
\end{thm1}
That the conditions are not sufficient is shown by an example of C. Weibel (see Remark \ref{example}).
We note that for any ring extension $A\subseteq B$, the condition $M\mathcal I(A, B)=0$ implies easily that $A$ is subintegrally closed in $B$ (see Lemma \ref{2.8}(4)).
 
 In Section 3, we examine the cokernel $M\mathcal I(A, B)$ in the general case. In order to do this, we first discuss the surjectivity of the natural  map $\varphi(A, C, B) : \mathcal I(A, B)\rightarrow \mathcal I(C, B)$ given by $\varphi(A, C, B)(I)= IC$ for any ring extensions $A\subseteq C \subseteq B$. We show that the map $\varphi(A, C, B)$ is surjective in two cases: (1) $C$ is subintegral over $A$, (2) $A\subseteq B$ is an integral extension with $A$ Hensel local (see Propositions \ref{3.5} and \ref{3.6}). We show further that if $C$ is subintegral over  $A$, then the sequence 
 $$ 1\rightarrow M\mathcal I(A,C) \rightarrow M\mathcal I(A, B) \rightarrow M\mathcal I(C, B) \rightarrow 1$$
 is exact (see Proposition \ref{3.7}). Finally we prove the following 
\begin{thm2}
  Let $A\subseteq B$ be a ring extension with $A$ Hensel local and $B$ seminormal. Then $M\mathcal I(A, B)\cong M\mathcal I(A,^{^+}\!\!\!A)\oplus M\mathcal I(^{^+}\!\!\!A, B)$, where $^{^+}\!\!\!A$ is the subintegral closure of $A$ in $B$. 
\end{thm2}
In this section we also observe that if $A$ is subintegrally closed in $B$ with $B$ a seminormal domain and $A$ Hensel local then $M\mathcal I(A, B)=0$ (see Proposition \ref{4.8}(4)).

 \section{Basic definitions and Notations}
 All the rings we consider are commutative with $1$, and all ring homomorphisms are unitary. Let $X$, $T$ be  indeterminates.\
  
 An {\bf elementary subintegral }extension is an extension of the form $A\subseteq B$ with $B= A[b]$ for some $b\in B$ such that $b^{2},b^{3}\in A$. An extension $A\subseteq B$ is {\bf subintegral} if it is a filtered union of elementary subintegral extensions; that is, for each $b\in B$ there is a finite sequence $A = C_{0}\subseteq C_{1}\subseteq \cdots \subseteq C_{r}\subseteq B$ of ring extensions such that $b\in C_{r}$ and $C_{i-1}\subseteq C_{i}$ is elementary subintegral for each $i$, $1\leq i \leq r $. We say that $A$ is {\bf subintegrally closed} in $B$ if whenever $b\in B$ and $b^{2}, b^{3}\in B$ then $b\in A$. The ring   $A$ is {\bf seminormal} if the following condition holds: $b,c\in A$ and $b^3=c^2$ imply that there exists $a\in A$ with $b=a^2$ and $c=a^3$.  A seminormal ring is necessarily reduced and is subintegrally  closed  in every reduced overring.  It is easily seen that if $A$ is subintegrally closed in $B$ with $B$ seminormal then $A$ is seminormal. For details see \cite{swan, mv}.\

  For a ring $A$ we denote by:
  
  $U(A)$:    The groups of units of $A$.
  
  $H^{0}(A)= H^{0}(\rm{Spec}(A), \mathbb Z)$: The group of continuous maps from  $\rm{Spec}(A)$ to $ \mathbb Z$.
  
  $\rm{\Pic}A$: The Picard group of $A$.
   
  $KU(A)$: Cokernel of the natural map $U(A)\rightarrow U(A[X])$.
  
  $MU(A)$:  Cokernel of the natural map $U(A)\rightarrow U(A[X, X^{-1}])$.
  
  $NU(A)$: Kernel of the map $U(A[X]) \rightarrow U(A)$.
  
  $\rm{K\Pic}A$:  Cokernel of the natural map $\Pic A\rightarrow \Pic A[X]$.
  
  $\rm{M\Pic} A$: Cokernel of the natural map $ \Pic A\rightarrow \Pic A[X, X^{-1}] $.
   
  $\rm{N\Pic} A$:  Kernel of the map $\Pic A[X]\rightarrow \Pic A$.
  
  $\rm{L\Pic }A$:  Cokernel of the map $\Pic A[X]\times \Pic A[X^{-1}]\stackrel{add}\rightarrow \Pic A[X, X^{-1}]$.

  \hspace{2cm}
   
  Let $A\subseteq B$ be a ring extension. Then we denote  by 
  
   $ \mathcal I(A,B) $:  The group of invertible $A$-submodules of $B$.
   
    It is easily seen that $\mathcal I$ is a functor from extensions of rings to abelian groups. Some properties of $\mathcal I(A, B)$ can be found in [4, Section 2].
    
     $K\mathcal I(A,B)$: Cokernel of the natural map $\mathcal I(A,B) \rightarrow  \mathcal I (A [X],B [X])$.
     
   $M \mathcal I(A,B) $: Cokernel of the natural map $\mathcal I(A,B) \rightarrow  \mathcal I (A [X, X^{-1}],B [X, X^{-1}])$.
   
   $N \mathcal I(A, B)$: Kernel of the map $\mathcal I(A[X], B[X]) \rightarrow \mathcal I(A, B)$ (Here the map is  induced by the B-algebra  homomorphism $B [X]\rightarrow  B$ given by $X\mapsto 0$).

 Recall from[4, Section 2] that for any commutative ring extension $A\subseteq B$, we have the exact sequence 
 $$ 
 1\rightarrow  U(A) \rightarrow  U(B) \rightarrow  \mathcal  I(A,B) \rightarrow  \Pic A \rightarrow  \Pic B. 
$$
Applying $M$, $K$ we obtain the chain complexes:
$$ 
 (1.0)\hspace{2cm}     1\rightarrow  MU(A) \rightarrow  MU(B) \rightarrow  M \mathcal  I(A,B) \stackrel{\eta}\rightarrow  M \Pic A \stackrel{\varphi}\rightarrow  M \Pic B
$$
and
$$ 
 (1.1)\hspace{2cm}     1\rightarrow  KU(A) \rightarrow  KU(B) \rightarrow  K \mathcal  I(A,B) \stackrel{\alpha}\rightarrow  K \Pic A \stackrel{\beta}\rightarrow  K \Pic B.
$$

\section{The map $\mathcal I(A, B)\rightarrow \mathcal I(A[X, X^{-1}], B[X, X^{-1}])$}
In this section we examine some conditions on $A\subseteq B$ under which the natural map \\ $\mathcal I(A, B)\rightarrow \mathcal I(A[X, X^{-1}], B[X, X^{-1}])$ is an isomorphism. For this  we consider the notions of quasinormal and anodal extensions (or $u$-closed).

Let $A\subseteq B$ be a ring extension. We say that $A$ is $\bf{quasinormal}$ in $B$ if the natural map $M\Pic A\rightarrow M\Pic B$ is injective. For properties of quasinormal extensions see \cite{Oy}.

An inclusion $A\subseteq B$ of rings is called $\bf{anodal}$ or $\bf{an\, \,anodal\,\, extension}$, if every $b\in B$ such that $(b^{2}- b)\in A$ and $(b^{3}- b^{2})\in A$ belongs to $A$. This notion was first introduced by Asanuma and Onoda-Yoshida in \cite{Oy}, and they called this notion  \textquoteleft $u$-closed\textquoteright. Some related details can be found in \cite{TA, Oy, wei}.

We first show in Proposition \ref{4.1} below that a subintegral extension is always an anodal extension, which is perhaps a result of independent interest.
\begin{lemma}\label{2.7}
 Let $A\subseteq C \subseteq B$ be extensions of rings. Then the following statements hold:
 \begin{enumerate}[(1)]
  \item If $A$ is anodal in $B$, then so is $A$ in $C$.
  \item If $A$ is anodal in $C$ and $C$ is anodal in $B$, then so is $A$ in $B$.
 \end{enumerate}

\end{lemma}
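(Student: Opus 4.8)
The plan is to prove both parts by directly unwinding the defining condition of an anodal extension, namely that $A\subseteq B$ is anodal when every $b\in B$ with $b^2-b\in A$ and $b^3-b^2\in A$ already lies in $A$. No machinery beyond this definition is needed; the content is entirely in tracking which ambient ring plays the role of $B$ and which set plays the role of the target $A$ at each stage.

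For part (1), I would start with an arbitrary $c\in C$ satisfying $c^2-c\in A$ and $c^3-c^2\in A$, and observe that since $C\subseteq B$ we may regard $c$ as an element of $B$. The two witnessing elements $c^2-c$ and $c^3-c^2$ still lie in $A$, so the hypothesis that $A$ is anodal in $B$ applies verbatim and forces $c\in A$. Since $c$ was arbitrary, $A$ is anodal in $C$.

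For part (2), the argument is a two-stage descent, and the only point requiring care is the order in which the two anodality hypotheses are invoked. I would take $b\in B$ with $b^2-b\in A$ and $b^3-b^2\in A$. Because $A\subseteq C$, these two elements also lie in $C$, so $b\in B$ satisfies $b^2-b\in C$ and $b^3-b^2\in C$; applying the hypothesis that $C$ is anodal in $B$ yields $b\in C$. Now $b$ is an element of $C$ whose witnessing elements $b^2-b$ and $b^3-b^2$ lie in $A$, so applying the hypothesis that $A$ is anodal in $C$ yields $b\in A$. Hence $A$ is anodal in $B$.

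There is no genuine obstacle here; both statements are formal consequences of the definition. The one thing I would be careful to state clearly is why the descent in part (2) must go through $C$ first: the containment $A\subseteq C$ is exactly what upgrades the hypotheses $b^2-b,\,b^3-b^2\in A$ into the weaker conditions $b^2-b,\,b^3-b^2\in C$ needed to apply anodality of $C$ in $B$, after which anodality of $A$ in $C$ finishes the job.
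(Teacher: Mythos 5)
Your proof is correct and is exactly the argument the paper has in mind; the paper simply states that the lemma is ``clear from the definition,'' and your write-up spells out that unwinding, including the correct order of applying the two hypotheses in part (2). No gaps or differences in approach.
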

\begin{proof}
 Clear from the definition.
\end{proof}

\begin{proposition}\label{4.1}
 Let $A\subseteq B$ be a ring extension. If $A\subseteq B$ is subintegral, then it is anodal.
 
\end{proposition}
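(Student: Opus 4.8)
The plan is to reduce the general subintegral case to the elementary one and then settle the elementary case by a direct computation. For the reduction, recall that by definition a subintegral extension is a filtered union of elementary subintegral extensions, so given any $b\in B$ with $b^2-b\in A$ and $b^3-b^2\in A$ I can choose a finite tower $A=C_0\subseteq C_1\subseteq\cdots\subseteq C_r\subseteq B$ with $b\in C_r$ in which each $C_{i-1}\subseteq C_i$ is elementary subintegral. If I know that each elementary subintegral step is anodal, then Lemma \ref{2.7}(2) together with induction on $r$ gives that $A$ is anodal in $C_r$; since $b^2-b,\,b^3-b^2\in A$ and $b\in C_r$, anodality yields $b\in A$. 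Thus it suffices to prove the proposition when $B=A[c]$ with $c^2,c^3\in A$.

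In that elementary situation I would first observe that $c^n\in A$ for all $n\ge 2$, so $B=A+Ac$ and any test element can be written as $x=a_0+a_1c$ with $a_0,a_1\in A$; proving $x\in A$ then amounts to showing $a_1c\in A$. Expanding $x^2-x$ and collecting the terms carrying a factor of $c$, the remaining $A$-part $a_0^2+a_1^2c^2-a_0$ genuinely lies in $A$ (here I use $c^2\in A$), so the hypothesis $x^2-x\in A$ forces the complementary term into $A$, giving the first relation $a_1(2a_0-1)c\in A$. Next, writing $\alpha=x^2-x\in A$, the second hypothesis reads $\alpha x=x^3-x^2\in A$, whence $\alpha a_1c\in A$; expanding $\alpha a_1c$ and discarding the summands that lie in $A$ by virtue of $c^2,c^3\in A$ yields the second relation $a_0(a_0-1)a_1c\in A$.

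The crux is then the elementary identity
$$(2a_0-1)^2-4a_0(a_0-1)=1,$$
which lets me write $a_1c$ as an $A$-linear combination of the two relations just obtained, namely $a_1c=(2a_0-1)\cdot[(2a_0-1)a_1c]-4\cdot[a_0(a_0-1)a_1c]$, where both bracketed quantities lie in $A$. Hence $a_1c\in A$ and therefore $x\in A$, completing the elementary case. The step I expect to require the most care is the extraction of the two relations: since $B=A+Ac$ need not be a direct sum, I cannot simply equate coefficients of $c$, so each relation must instead be justified by exhibiting the complementary summand as an explicit element of $A$ (using $c^2,c^3\in A$) and then invoking the hypothesis that the entire expression lies in $A$.
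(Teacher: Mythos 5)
Your proof is correct and follows essentially the same route as the paper: reduce to the elementary case via Lemma \ref{2.7}(2) and induction along the tower, write the test element as $a_0+a_1c$, extract two relations placing multiples of $a_1c$ in $A$, and combine them through a polynomial identity equal to $1$. The only differences are cosmetic: the paper derives the relations $(2a-1)\lambda b,\ (3a^2-1)\lambda b\in A$ (the latter from $f^3-f\in A$) and uses the identity $(6a+3)(2a-1)-4(3a^2-1)=1$, whereas you derive $(2a_0-1)a_1c,\ a_0(a_0-1)a_1c\in A$ (via $\alpha x=x^3-x^2$) and use $(2a_0-1)^2-4a_0(a_0-1)=1$.
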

\begin{proof}
 Assume first that $A\subseteq B$ is an elementary subintegral extension, i.e., $A\subseteq B= A[b]$ such that $b^{2}, b^{3}\in A$. Let $f\in B$ such that $f^{2}- f, f^{3}- f^{2}\in A$. We have to show that $f\in A$. Clearly $f$ is of the form $a + \lambda b$ where $a, \lambda \in A$. So it is enough to show that $\lambda b\in A$. Since $\lambda b(2a- 1), \lambda b (3a^{2} -1) \in A$,  $\lambda b= \lambda b. 1=\lambda b[(6a +3) (2a- 1) - 4(3a^{2}- 1)]\in A$. Hence $f\in A$.\
 
 In the general case, for $f\in B$ there exists a finite sequence $A= C_{0}\subseteq C_{1}\subseteq......\subseteq C_{r}\subseteq B$ of extensions such that $C_{i}\subseteq C_{i+ 1}$ is an elementary subintegral extension for each $i, 0\leq i\leq r- 1$  and $f\in C_{r}$. So by the above argument $C_{i}\subseteq C_{i +1}$ is anodal for each $i$. Now the result follows from  Lemma \ref{2.7}(2).
\end{proof}

 The following result is due to Sadhu and Singh (\cite{ss}, Theorem 1.5) which we use frequently throughout this paper:
 
 \begin{lemma}\label{2.1}
 Let $A\subseteq  B$ be a ring extension. Then $A$ is subintegrally closed in $B$ if and only if the canonical map $\mathcal I(A,B) \rightarrow  \mathcal I (A [X],B [X])$ is an isomorphism.
\qed
 \end{lemma}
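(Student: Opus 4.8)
The plan is to recast the statement as the vanishing of a cokernel and then to detect that vanishing through the fundamental exact sequence of Section~1. The map $\mathcal I(A,B)\to\mathcal I(A[X],B[X])$ is split injective, the retraction being induced by the $B$-algebra map $X\mapsto0$; hence $\mathcal I(A[X],B[X])=\mathcal I(A,B)\oplus N\mathcal I(A,B)$ and the map is an isomorphism precisely when $N\mathcal I(A,B)=0$. For the same reason $KU(R)\cong NU(R)$ and $K\Pic R\cong N\Pic R$. Thus it suffices to prove that $N\mathcal I(A,B)=0$ if and only if $A$ is subintegrally closed in $B$.

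Applying the kernel functor $N(\,\cdot\,)=\ker\bigl(F(\,\cdot\,[X])\to F(\,\cdot\,)\bigr)$, with $F$ each of $U$, $\mathcal I$, $\Pic$, to the fundamental sequence produces the sequence
\[
1\to NU(A)\to NU(B)\to N\mathcal I(A,B)\to N\Pic A\to N\Pic B .
\]
This is genuinely exact (unlike the cokernel complex $(1.1)$): the fundamental sequence for $A[X]\subseteq B[X]$ is exact and, via base change and $X\mapsto0$, splits as the direct sum of the fundamental sequence for $A\subseteq B$ and the sequence above, so the summand above is exact too. A short diagram chase then shows that $N\mathcal I(A,B)=0$ if and only if $NU(A)\to NU(B)$ is surjective and $N\Pic A\to N\Pic B$ is injective, and I would check these two conditions against subintegral closedness separately.

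The unit part is elementary. Since a unit of $R[X]$ that is $\equiv1\pmod X$ has nilpotent higher coefficients, $NU(R)$ is governed entirely by $\mathrm{nil}(R)$. If $A$ is subintegrally closed in $B$, then every nilpotent of $B$ is subintegral over $A$ — a nilpotent element generates a nilpotent, hence subintegral, extension — so it lies in $A$; thus $\mathrm{nil}(B)=\mathrm{nil}(A)$ and $NU(A)\to NU(B)$ is an isomorphism. Conversely a nilpotent $b\in B\setminus A$ both obstructs surjectivity and witnesses non-closedness.

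The Picard part is the crux, and is where the hypothesis is really consumed: once the nilpotent contributions are accounted for, I must identify injectivity of $N\Pic A\to N\Pic B$ with subintegral closedness. Here I would invoke the relative form of the Traverso seminormality theorem, in which $N\Pic R$ detects the failure of seminormality of $R_{\mathrm{red}}$. Given $b\in B\setminus A$ with $b^2,b^3\in A$, I would construct by Milnor patching along the conductor square relating $A[X]$ and $B[X]$ a rank-one projective $A[X]$-module that is trivial at $X=0$ and becomes free over $B[X]$, hence lies in $\ker(N\Pic A\to N\Pic B)$, yet is not free over $A[X]$; conversely a nonzero kernel class must, after descent, produce such a $b$. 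The main obstacle is exactly this patching-and-descent bookkeeping: exhibiting the non-free module explicitly and proving simultaneously that it dies over $B[X]$ and at $X=0$ while remaining nontrivial over $A[X]$.
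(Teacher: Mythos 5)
Your skeleton is correct, and it is worth noting that the paper itself offers no proof to compare against: Lemma \ref{2.1} is quoted from (\cite{ss}, Theorem 1.5), so your attempt has to stand on its own. The parts that do stand are: the splitting $\mathcal I(A[X],B[X])\cong\mathcal I(A,B)\oplus N\mathcal I(A,B)$; the observation that the sequence $1\rightarrow NU(A)\rightarrow NU(B)\rightarrow N\mathcal I(A,B)\rightarrow N\Pic A\rightarrow N\Pic B$ is genuinely exact because it is a direct summand of the exact fundamental sequence for $A[X]\subseteq B[X]$ (a real improvement on the cokernel complex (1.1), which is only exact away from $K\Pic A$); the resulting criterion that $N\mathcal I(A,B)=0$ if and only if $NU(A)\rightarrow NU(B)$ is surjective and $N\Pic A\rightarrow N\Pic B$ is injective; and the computation $NU(R)=1+X\,\mathrm{nil}(R)[X]$ together with the fact that subintegral closedness forces $\mathrm{nil}(B)\subseteq A$.

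The proof stops, however, exactly where the content of the theorem lies, and the plan for the remaining Picard step is partly unworkable. First, the converse direction as you set it up would fail: take $A=k[t]\subseteq B=k[t,\epsilon]/(\epsilon^{2},\epsilon t)$ and $b=t+\epsilon$. Then $b^{2}=t^{2},\,b^{3}=t^{3}\in A$ and $b\notin A$, with $b$ not nilpotent; yet $A$ is regular, so $N\Pic A=0$ and every rank-one projective over $A[X]=k[t,X]$ is free --- no Milnor-patched class that is ``not free over $A[X]$'' can exist. Here the entire obstruction sits in $NU(A)\rightarrow NU(B)$ even though the witness is not nilpotent, so your dichotomy (nilpotent witness $\leftrightarrow$ units, general witness $\leftrightarrow$ Pic) is wrong as stated; one must first use $NU$-surjectivity to get $\mathrm{nil}(B)\subseteq A$, pass to $A_{red}\subseteq B_{red}$ via $N\Pic R\cong N\Pic R_{red}$, and only then run the conductor-square argument, which is Lemma 1.4 of \cite{Oy} --- precisely the ``bookkeeping'' you defer. (In fact this direction never needs Pic at all: for a witness $b$, the module $I_{b}=(1+bX)A[X]+b^{2}X^{2}A[X]$ is invertible with inverse $(1-bX)A[X]+b^{2}X^{2}A[X]$, lies in $N\mathcal I(A,B)$, and if it were extended from $\mathcal I(A,B)$ then evaluation at $X=0$ would force $I_{b}=A[X]$, hence $1+bX\in A[X]$, hence $b\in A$.) Second, and more seriously, the hard direction --- $A$ subintegrally closed in $B$ (both reduced) implies $N\Pic A\rightarrow N\Pic B$ injective --- is supported only by an appeal to ``the relative form of the Traverso seminormality theorem,'' which is not an existing result you can invoke: Traverso/Swan give the absolute statement $N\Pic R=0\Leftrightarrow R_{red}$ seminormal, and that is useless here because subintegral closedness in a particular $B$ does not make $A$ seminormal, so $N\Pic A\neq 0$ in general and the injectivity of $N\Pic A\rightarrow N\Pic B$ is exactly equivalent (given your reduction) to the statement being proved. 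What you have is therefore a correct and rather elegant reduction of the lemma to that Picard statement, not a proof of it.
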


 \noindent One can restate the above result in the following way: $A$ is subintegrally closed in $B$ $\Leftrightarrow K\mathcal I(A, B)=0 \Leftrightarrow   N\mathcal I(A, B)= 0$.

 The following result is due to Weibel (\cite{wei}, Theorem 5.2).
 
 \begin{lemma}\label{we}
  There is a natural decomposition $$ \Pic A[X, X^{-1}]\cong \Pic A \oplus N\Pic A \oplus N\Pic A \oplus L\Pic A $$
for any commutative ring $A$.
\qed
 \end{lemma}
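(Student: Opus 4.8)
The plan is to obtain the decomposition as a formal consequence of the single structural fact that $\Pic$ is a \emph{contracted functor}, in the sense of Bass. For a functor $F$ from commutative rings to abelian groups, write $i_{\pm}$ for the maps induced by the inclusions $A\hookrightarrow A[X]$ and $A\hookrightarrow A[X^{-1}]$, and $j_{\pm}$ for those induced by $A[X]\hookrightarrow A[X,X^{-1}]$ and $A[X^{-1}]\hookrightarrow A[X,X^{-1}]$. The evaluation $X\mapsto 0$ splits $i_{+}$, so $F(A[X])=F(A)\oplus NF(A)$, and by the symmetry $X\leftrightarrow X^{-1}$ one likewise has $F(A[X^{-1}])=F(A)\oplus NF(A)$ with the \emph{same} group $NF(A)$. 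To say that $F$ is contracted is to say that the natural sequence
$$0\to F(A)\xrightarrow{(i_{+},\,i_{-})}F(A[X])\oplus F(A[X^{-1}])\xrightarrow{\,j_{+}-j_{-}\,}F(A[X,X^{-1}])\to LF(A)\to 0$$
is split exact; note that the cokernel of $j_{+}-j_{-}$ agrees with $LF(A)$ as defined in Section 1 (via the sum map), since negating the second factor is an automorphism. I would take $F=\Pic$ throughout.

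The geometric core, and the step I expect to be the main obstacle, is verifying that $\Pic$ is contracted. I would do this with the Mayer--Vietoris sequence in $\mathbb{G}_{m}$-cohomology attached to the standard Zariski cover of $\mathbb{P}^{1}_{A}$ by its two affine charts $U=\operatorname{Spec}A[X]$ and $V=\operatorname{Spec}A[X^{-1}]$, whose overlap is $U\cap V=\operatorname{Spec}A[X,X^{-1}]$. Since $H^{1}(-,\mathbb{G}_{m})=\Pic$, this cover produces an exact sequence
$$\Pic\mathbb{P}^{1}_{A}\to \Pic A[X]\oplus\Pic A[X^{-1}]\xrightarrow{\,j_{+}-j_{-}\,}\Pic A[X,X^{-1}]\to H^{2}(\mathbb{P}^{1}_{A},\mathbb{G}_{m}).$$
Injectivity of $(i_{+},i_{-})$ is immediate, since $X\mapsto 1$ retracts $i_{+}$. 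The crux is exactness in the middle, and here the key computation is $\Pic\mathbb{P}^{1}_{A}\cong\Pic A\oplus H^{0}(A)$: the $\Pic A$ summand (constant bundles) maps diagonally into $\Pic A[X]\oplus\Pic A[X^{-1}]$, while the degree summand $H^{0}(A)$, generated by the bundles $\mathcal{O}(n)$, restricts to the \emph{trivial} bundle on each affine chart and hence dies. Thus the image of $\Pic\mathbb{P}^{1}_{A}$ is exactly the diagonal copy of $\Pic A$, which is $\operatorname{im}(i_{+},i_{-})$; combined with Mayer--Vietoris exactness this identifies $\ker(j_{+}-j_{-})$ with $\operatorname{im}(i_{+},i_{-})$. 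Naturality of the cover and of the retraction $X\mapsto 1$ supplies the splittings, so the four-term sequence above is split exact.

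Granting contractedness, the rest is formal. Substituting $\Pic A[X]=\Pic A\oplus N\Pic A$ and $\Pic A[X^{-1}]=\Pic A\oplus N\Pic A$, the map $(i_{+},i_{-})$ becomes the diagonal embedding of $\Pic A$ into the two $\Pic A$-summands, leaving the two $N\Pic A$ factors untouched. Hence the image of $j_{+}-j_{-}$ is
$$\bigl(\Pic A\oplus\Pic A\oplus N\Pic A\oplus N\Pic A\bigr)\big/\Delta(\Pic A)\;\cong\;\Pic A\oplus N\Pic A\oplus N\Pic A,$$
the diagonal copy of $\Pic A$ collapsing to a single $\Pic A$. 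The fundamental sequence then becomes
$$0\to \Pic A\oplus N\Pic A\oplus N\Pic A\to \Pic A[X,X^{-1}]\to L\Pic A\to 0,$$
and the splitting built into contractedness (concretely, the retraction of the $\Pic A$ factor by $X\mapsto 1$ together with the natural section onto $L\Pic A$) shows it splits. This yields the asserted decomposition $\Pic A[X,X^{-1}]\cong\Pic A\oplus N\Pic A\oplus N\Pic A\oplus L\Pic A$.
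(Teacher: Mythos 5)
Your formal reduction (Bass's contracted-functor formalism $\Rightarrow$ the decomposition) is correct, and Mayer--Vietoris over $\mathbb{P}^{1}_{A}$ is a reasonable frame for the exactness part. But note first that the paper does not prove this lemma at all: it quotes it as Theorem 5.2 of Weibel's paper \emph{Pic is a contracted functor}, and the statement is essentially the main theorem of that paper, so a short self-contained argument should be viewed with suspicion. Indeed your proposal has two genuine gaps. The first is the ``key computation'' $\Pic \mathbb{P}^{1}_{A}\cong \Pic A\oplus H^{0}(A)$: you assert it without proof, and for an \emph{arbitrary} commutative ring $A$ it is not a quotable textbook fact (standard references prove it for fields or regular noetherian rings). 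Worse, relative to your own Mayer--Vietoris sequence it is \emph{equivalent} to the middle exactness you want to deduce from it: the units part of that sequence (via the decomposition of $U(A[X,X^{-1}])$) gives an exact sequence $0\to H^{0}(A)\to \Pic\mathbb{P}^{1}_{A}\to \ker(j_{+}-j_{-})\to 0$, so saying that every line bundle on $\mathbb{P}^{1}_{A}$ is a pullback twisted by a locally constant $\mathcal{O}(n)$ is literally the same as saying $\ker(j_{+}-j_{-})=\im(i_{+},i_{-})$. An independent argument (e.g.\ cohomology and base change for noetherian $A$, then a filtered-colimit reduction) is required and is missing.

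The second gap is the decisive one: the splitting. Even granting the $\mathbb{P}^{1}$ computation, Mayer--Vietoris yields only the exact sequence $0\to \Pic A\oplus N\Pic A\oplus N\Pic A\to \Pic A[X,X^{-1}]\to L\Pic A\to 0$, whereas the lemma asserts a natural direct sum decomposition, i.e.\ that this sequence splits naturally. Your justification --- that the splitting is ``built into contractedness'' and is supplied by ``naturality of the cover and of the retraction $X\mapsto 1$'' --- is circular: the splitting is part of the definition of contracted that you set out to verify, and the retraction $X\mapsto 1$ only splits off the $\Pic A$ summand; it induces no section of $\Pic A[X,X^{-1}]\twoheadrightarrow L\Pic A$ and no projection onto the two $N\Pic A$ factors, and extensions of abelian groups do not split for free ($L\Pic A$ is in general only torsion-free). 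Constructing this natural section is precisely the hard content of Weibel's theorem: he identifies $L\Pic A$ with the \'etale cohomology group $H^{1}_{\mathrm{et}}(\operatorname{Spec} A,\mathbb{Z})$ and obtains the section as a cup product with the unit $X$. Nothing in your proposal supplies this step, so the argument proves strictly less than the stated lemma.
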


 \begin{remark}\label{we1}\rm{
 By Swan Theorem \cite{swan}, $N\Pic A =0$ if and only if $A_{red}$ is seminormal. So for a seminormal ring $A$, $L\Pic A\cong M\Pic A$. }
 \end{remark}

The next result is given in (\cite{wei 1}, Exercise 3.17, Page 30).
 
 \begin{lemma}\label{LU}
  There is a natural decomposition 
  $$U(A[X, X^{-1}])\cong U(A)\oplus NU(A)\oplus NU(A)\oplus H^{0}(A)$$ for any commutative ring $A$. 
  \qed
 \end{lemma}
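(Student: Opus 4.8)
The plan is to deduce this decomposition from a single fundamental exact sequence for the units functor, in close analogy with the Picard-group decomposition of Lemma~\ref{we}. First I would record the two elementary splittings coming from evaluation: the inclusion $A\hookrightarrow A[X]$ together with the retraction $X\mapsto 0$ yields $U(A[X])\cong U(A)\oplus NU(A)$, and symmetrically $U(A[X^{-1}])\cong U(A)\oplus NU(A)$ (recall that $NU(A)$ is by definition the kernel of the evaluation, and its elements are exactly the units congruent to $1$ modulo the variable, necessarily with nilpotent coefficients). Viewing $A[X]$ and $A[X^{-1}]$ as subrings of $A[X,X^{-1}]$, I would then set up
\[
1\longrightarrow U(A)\stackrel{\Delta}\longrightarrow U(A[X])\oplus U(A[X^{-1}])\stackrel{\delta}\longrightarrow U(A[X,X^{-1}])\stackrel{\deg}\longrightarrow H^{0}(A)\longrightarrow 0,
\]
where $\Delta(a)=(a,a)$, $\delta(f,g)=fg^{-1}$, and $\deg$ assigns to a unit its locally constant leading exponent (well defined after reduction modulo $\mathrm{Nil}(A)$). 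Exactness on the left is immediate from $A[X]\cap A[X^{-1}]=A$, and surjectivity of $\deg$ is witnessed by the units $eX+(1-e)$ attached to idempotents $e\in A$.

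Granting this sequence, the claimed decomposition drops out formally. Factoring out the diagonal copy $\Delta(U(A))$ and substituting the splittings of the first paragraph (in which constants have trivial $NU$-component) converts the sequence into a short exact sequence
\[
1\longrightarrow U(A)\oplus NU(A)\oplus NU(A)\longrightarrow U(A[X,X^{-1}])\stackrel{\deg}\longrightarrow H^{0}(A)\longrightarrow 0,
\]
where the single surviving copy of $U(A)$ is the antidiagonal. This splits through the explicit section of $\deg$ sending a locally constant function $n$ (which takes finitely many values by quasicompactness of $\mathrm{Spec}(A)$, hence corresponds to an orthogonal decomposition $1=\sum_i e_i$ with $n=n_i$ on $e_i$) to the unit $\sum_i e_iX^{n_i}$. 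Since this section is produced outright, no projectivity of $H^{0}(A)$ is required, and because $\Delta$, $\delta$, $\deg$, the evaluations and the idempotent section are all natural in $A$, the resulting isomorphism is natural.

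The main obstacle is exactness at the third term: every degree-zero unit of $A[X,X^{-1}]$ must lie in the subgroup generated by $U(A[X])$ and $U(A[X^{-1}])$, i.e. equal some $fg^{-1}$. I would establish this in two stages. For $A$ reduced and connected, a standard argument (embedding $A$ into the product of its domains $A/\mathfrak p$ and noting that the leading exponent is locally constant, hence constant) shows that the only units of $A[X,X^{-1}]$ are the monomials $aX^{n}$, so the degree-zero units are exactly the constants $U(A)=\operatorname{im}\delta$. Decomposing $\mathrm{Spec}(A)$ into its idempotent pieces and passing to $A_{red}$ then reduces the general assertion to this case together with the unipotent units, those congruent to $1$ modulo $\mathrm{Nil}(A)$. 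The genuinely technical point, which I expect to be the crux, is that each such unipotent Laurent unit factors as (a unit of $A[X]$) times (a unit of $A[X^{-1}]$); this is transparent when $\mathrm{Nil}(A)^2=0$ by splitting the nilpotent part of the Laurent polynomial into its non-negative-degree and strictly-negative-degree halves, and in general follows by an induction on the nilpotency index along the filtration by powers of $\mathrm{Nil}(A)$. Once this factorization is in hand the sequence is exact at $U(A[X,X^{-1}])$, and the decomposition follows.
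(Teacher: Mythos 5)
The first thing to say is that the paper has no proof of this lemma for you to match: it is stated with a reference to Weibel's K-Book (\cite{wei 1}, Exercise 3.17, p.~30) and no argument, exactly as the companion decomposition of $\Pic A[X,X^{-1}]$ (Lemma \ref{we}) is quoted from \cite{wei}. Your proposal therefore supplies a self-contained proof of a result the paper treats as a black box, and it is the standard argument that the cited exercise intends: the Mayer--Vietoris-style units sequence $1\to U(A)\to U(A[X])\oplus U(A[X^{-1}])\to U(A[X,X^{-1}])\to H^{0}(A)\to 0$; the structure of units over reduced rings (over a domain the units of $A[X,X^{-1}]$ are the monomials $aX^{n}$ by the leading/trailing-degree argument, and the level sets of $\mathfrak p\mapsto n_{\mathfrak p}$ are the open sets $D(c_{k})$, where the $c_{k}$ are the finitely many coefficients of the given unit, hence are clopen and finite in number); the explicit idempotent section $n\mapsto\sum_{i}e_{i}X^{n_{i}}$ of $\deg$; and the factorization of unipotent Laurent units by splitting off the nonnegative- and negative-degree parts. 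The individual steps check out: exactness at the middle term via $A[X]\cap A[X^{-1}]=A$, surjectivity of $\deg$, the homomorphism property and naturality of the section, and the passage from the split four-term sequence to the stated direct sum are all sound. A side benefit of having the explicit proof is that it makes Remark \ref{LU1} transparent: for $A$ reduced, $NU(A)=0$, so the decomposition collapses to $MU(A)\cong H^{0}(A)$.

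Two imprecisions should be repaired, though neither is fatal. First, you define $\deg$ by ``reduction modulo $\mathrm{Nil}(A)$,'' which presupposes the reduced structure theorem you only prove in your final paragraph; this is not circular, but it is cleaner to define $\deg$ prime by prime (it lands in $H^{0}(A)$ by the $D(c_{k})$ observation above). The same adjustment removes your appeal to ``decomposing $\mathrm{Spec}(A)$ into its idempotent pieces'': such a global decomposition need not exist, since $\mathrm{Spec}(A)$ can have infinitely many connected components and these need not be cut out by idempotents; every clopen decomposition in the argument must be taken relative to a fixed unit, for which only finitely many degrees occur. Second, your induction ``along the filtration by powers of $\mathrm{Nil}(A)$'' does not terminate in general, because $\mathrm{Nil}(A)$ need not be a nilpotent ideal when $A$ is not noetherian. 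Run the induction instead on the ideal $N$ generated by the finitely many coefficients of the particular element $1+\nu$ being factored: $N$ is generated by finitely many nilpotents, hence is a nilpotent ideal, and your argument goes through verbatim with the error terms at each stage lying in successively higher powers of $N$. With these two adjustments your proof is complete and natural in $A$.
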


 \begin{remark}\label{LU1}\rm{
   It follows that for a reduced ring $A$, $H^{0}(A)\cong MU(A)$.}
 \end{remark}

 \begin{lemma}\label{2.2}
  The natural map $\phi: \mathcal I(A,B) \rightarrow  \mathcal I (A [X, X^{-1}],B [X, X^{-1}])$, given by  $I\rightarrow IA[X, X^{-1}] $, is injective. Thus, $\phi $ is an isomorphism if and only if $M\mathcal I(A, B)=0$.
 \end{lemma}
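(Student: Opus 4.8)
The plan is to verify injectivity by computing the kernel of $\phi$ directly, after which the second assertion will be immediate from the definition of $M\mathcal I(A,B)$ as the cokernel of $\phi$. First I would record that $\phi$ is a homomorphism of abelian groups, namely the map on $\mathcal I$ induced by the extension $A\subseteq B$ under base change to $A[X,X^{-1}]$; the identity element of $\mathcal I(A,B)$ is $A$ itself and that of $\mathcal I(A[X,X^{-1}],B[X,X^{-1}])$ is $A[X,X^{-1}]$. Since $\mathcal I(A,B)$ is a group, it suffices to show that $\ker\phi$ is trivial, that is, that $IA[X,X^{-1}]=A[X,X^{-1}]$ forces $I=A$ for every $I\in\mathcal I(A,B)$.

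The key observation is an explicit description of $IA[X,X^{-1}]$. Because $I$ is an $A$-submodule of $B$, the $A[X,X^{-1}]$-submodule of $B[X,X^{-1}]$ generated by $I$ is precisely the set of Laurent polynomials all of whose coefficients lie in $I$: a finite sum $\sum_k f_k g_k$ with $f_k\in I$ and $g_k=\sum_n a_{k,n}X^n\in A[X,X^{-1}]$ has $X^n$-coefficient $\sum_k a_{k,n}f_k\in I$, and conversely every such Laurent polynomial arises this way. Now I would invoke that $B[X,X^{-1}]$ is a free $B$-module on $\{X^n\}_{n\in\mathbb{Z}}$, so that two of its elements coincide exactly when their coefficients agree. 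Comparing constant terms in the assumed equality $IA[X,X^{-1}]=A[X,X^{-1}]$ then yields both inclusions at once: each $i\in I$ equals $iX^0\in A[X,X^{-1}]$, so $i\in A$ and $I\subseteq A$; and each $a\in A\subseteq A[X,X^{-1}]=IA[X,X^{-1}]$ has its constant term in $I$, so $a\in I$ and $A\subseteq I$. Hence $I=A$, and $\phi$ is injective.

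For the closing clause, an injective map is an isomorphism if and only if it is surjective, and by definition $M\mathcal I(A,B)$ is the cokernel of $\phi$; thus $\phi$ is an isomorphism precisely when $M\mathcal I(A,B)=0$. I do not expect a genuine obstacle here: the whole argument is a coefficient comparison, and the only point deserving a moment's care is the identification of $IA[X,X^{-1}]$ with the module of Laurent polynomials having coefficients in $I$, which uses only that $I$ is an $A$-module and that $B[X,X^{-1}]$ is free over $B$.
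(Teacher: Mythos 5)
Your proof is correct, and it follows the same basic strategy as the paper's --- compute the kernel of $\phi$ directly by comparing Laurent coefficients --- but the two arguments close the loop differently. The paper writes $I=(b_1,\dots,b_r)A$ with $b_i\in B$, observes that $b_i\in IA[X,X^{-1}]\cap B=A[X,X^{-1}]\cap B=A$ to get $I\subseteq A$, then applies the same reasoning to $I^{-1}$ to get $I^{-1}\subseteq A$, and concludes $I=A$ from $A=II^{-1}\subseteq IA=I$; invertibility of $I$ is thus used twice (finite generation and the inverse module). You instead identify $IA[X,X^{-1}]$ explicitly as the set of Laurent polynomials all of whose coefficients lie in $I$, which hands you both inclusions at once: constant terms of elements of $I$ give $I\subseteq A$, and the constant term of $1\in A[X,X^{-1}]=IA[X,X^{-1}]$ gives $1\in I$, hence $A\subseteq I$. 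This bypasses $I^{-1}$ entirely, so your kernel computation actually proves the slightly stronger statement that \emph{any} $A$-submodule $I$ of $B$ with $IA[X,X^{-1}]=A[X,X^{-1}]$ must equal $A$; the paper's version is a bit shorter but leans on the group structure of $\mathcal I(A,B)$. Both treatments of the final clause (injective $+$ trivial cokernel $\Leftrightarrow$ isomorphism) are the same and immediate.
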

 \begin{proof}
  Let $I= ( b_{1}, b_{2}, ..., b_{r})A\in \ker \phi $, where $b_{i}\in B$. Then $I A[X, X^{-1}]= A[X, X^{-1}]$. This implies that $b_{i}\in A[X, X^{-1}]\cap B= A $, for all $i$. So $I \subseteq A$. Similarly $I^{-1}\subseteq A$. Hence $I= A$.
 \end{proof}
 
 \begin{lemma}\label{2.3}
  The sequence (1.0)[respectively (1.1)] is exact, except possibly at the place $M\Pic A$ [respectively $K\Pic A$]. It is exact there too if the map $\Pic A\rightarrow \Pic B$ is surjective.
 \end{lemma}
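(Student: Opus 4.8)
The plan is to realise both sequences (1.0) and (1.1) as the cokernel complexes arising from a short exact sequence of chain complexes, and then read off exactness from the long exact homology sequence. I will write out the argument for (1.0); the case of (1.1) is identical, replacing $A[X,X^{-1}]$ by $A[X]$ throughout.

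First I would form the commutative diagram whose top row is the fundamental exact sequence
$$1\to U(A)\to U(B)\to \mathcal I(A,B)\to \Pic A\to \Pic B$$
for the extension $A\subseteq B$, and whose bottom row is the same fundamental sequence for the extension $A[X,X^{-1}]\subseteq B[X,X^{-1}]$; the vertical maps are the natural maps induced by the inclusions $A\hookrightarrow A[X,X^{-1}]$ and $B\hookrightarrow B[X,X^{-1}]$. The evaluation $X\mapsto 1$ is a ring retraction of these inclusions, and it induces a retraction of the extension $A[X,X^{-1}]\subseteq B[X,X^{-1}]$ onto $A\subseteq B$. Since $U$, $\Pic$ and $\mathcal I$ are functors, each carries this retraction to a retraction, so every vertical map is split injective. (For the $\mathcal I$ column, injectivity is also Lemma \ref{2.2}.) Consequently the columns assemble into a short exact sequence of complexes $0\to E_1\to E_2\to E_3\to 0$, in which $E_1$ is the top row, $E_2$ is the bottom row, and the cokernel complex $E_3$ is precisely the sequence (1.0), with terms $MU(A),MU(B),M\mathcal I(A,B),M\Pic A,M\Pic B$.

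Next I would record the homology of $E_1$ and $E_2$. Index the five terms of each row in degrees $4,3,2,1,0$ from left to right, so that $\Pic B$ (respectively $\Pic B[X,X^{-1}]$) sits in degree $0$. The fundamental sequence is exact at every term except the last, so $H_i(E_1)=H_i(E_2)=0$ for all $i\ge 1$, while $H_0(E_1)=\operatorname{coker}(\Pic A\to \Pic B)$ and $H_0(E_2)=\operatorname{coker}(\Pic A[X,X^{-1}]\to \Pic B[X,X^{-1}])$. Feeding $0\to E_1\to E_2\to E_3\to 0$ into the long exact homology sequence and using $H_i(E_1)=H_i(E_2)=0$ for $i\ge 1$ gives $H_i(E_3)=0$ for all $i\ge 2$; this is exactly the asserted exactness of (1.0) at $MU(A)$, $MU(B)$ and $M\mathcal I(A,B)$. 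At degree $1$ the long exact sequence reads $0=H_1(E_2)\to H_1(E_3)\to H_0(E_1)\to H_0(E_2)$, so $H_1(E_3)$, the homology of (1.0) at $M\Pic A$, embeds into $\operatorname{coker}(\Pic A\to \Pic B)$. Hence if $\Pic A\to \Pic B$ is surjective then $H_0(E_1)=0$, forcing $H_1(E_3)=0$, i.e.\ (1.0) is exact at $M\Pic A$ as well.

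The routine verifications (commutativity of the diagram, that the bottom row is again an instance of the fundamental sequence, and the identification of the cokernel terms with $MU$, $M\mathcal I$, $M\Pic$) present no real difficulty. The point requiring care, and the step I expect to be the crux, is the homological bookkeeping at the $M\Pic A$ node: one must check that the vertical maps really are injective (so that $E_3$ is genuinely the cokernel complex and the sequence of complexes is short exact), and then track the connecting map to see that the obstruction to exactness at $M\Pic A$ is exactly $\ker\bigl(\operatorname{coker}(\Pic A\to \Pic B)\to \operatorname{coker}(\Pic A[X,X^{-1}]\to \Pic B[X,X^{-1}])\bigr)$, which vanishes as soon as $\Pic A\to \Pic B$ is onto. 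If one prefers to avoid the language of chain complexes, the same conclusion follows from applying the snake lemma repeatedly along the diagram; the long exact sequence merely packages that chase efficiently.
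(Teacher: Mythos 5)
Your proposal is correct and is essentially the paper's own argument: the paper forms exactly the same diagram (the two fundamental exact sequences for $A\subseteq B$ and $A[X,X^{-1}]\subseteq B[X,X^{-1}]$ as rows, with exact columns ending in the $M$-cokernels) and concludes by chasing it, which your long exact homology sequence for $0\to E_1\to E_2\to E_3\to 0$ merely packages, as you yourself note. The one point you make explicit that the paper leaves implicit --- injectivity of the vertical maps via the retraction $X\mapsto 1$, so that the columns really are short exact --- is a welcome clarification but not a different method.
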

\begin{proof}
 We have the following commutative diagram

\tiny
 $$\xymatrix @R=.35in @C=.2in
 { & 1 \ar[d] & 1 \ar[d] & 1 \ar[d] & 0 \ar[d] & 0 \ar[d] \\
 1 \ar[r] & U(A) \ar[r] \ar[d] & U(B) \ar[r] \ar[d] & \mathcal I(A, B) \ar[r] \ar[d] & \Pic A \ar[r] \ar[d] & \Pic B \ar[d] \\
 1 \ar[r] & U(A[X,X^{-1}]) \ar[r] \ar[d] & U(B[X,X^{-1}]) \ar[r] \ar[d] &  \mathcal I(A[X,X^{-1}], B[X,X^{-1}]) \ar[r] \ar[d] & \Pic A [X,X^{-1}] \ar[r] \ar[d] & \Pic B [X,X^{-1}] \ar[d] \\
 1 \ar[r]  & MU(A) \ar[r] \ar[d] & MU(B) \ar[r] \ar[d] &  M\mathcal I(A, B) \ar[r] \ar[d] & M\Pic A \ar[r] \ar[d] & M\Pic B \ar[d] \\
  & 1 & 1 & 1 & 0 & 0
 }
 $$
 \normalsize
 where the first two rows are exact and each column is exact. Now the result follows by chasing this diagram.
\end{proof}

\begin{lemma}\label{2.4}
 Let $A\subseteq B$ be a ring extension. The map $\Pic A\rightarrow \Pic B $ is surjective if any one of the following  conditions holds:
 \begin{enumerate}[(1)]
 \item $A \subseteq B$ is subintegral.
 \item $A\subseteq B$ is an integral, birational extension of domains with $\dim A\leq 1$.
\end{enumerate}
 \end{lemma}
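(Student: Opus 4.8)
The plan is to run both parts through the conductor square. Writing $\cc = (A :_B B)$ for the conductor, a common ideal of $A$ and $B$, I would form the square
$$
\begin{array}{ccc}
A & \hookrightarrow & B\\
\downarrow & & \downarrow\\
A/\cc & \hookrightarrow & B/\cc
\end{array}
$$
which is Cartesian with surjective right vertical map, hence a Milnor square. It therefore produces the Mayer--Vietoris exact sequence
$$
U(B/\cc)\rightarrow \Pic A \rightarrow \Pic B \oplus \Pic(A/\cc) \stackrel{\delta}\rightarrow \Pic(B/\cc).
$$
From this I extract the principle I will use: \emph{if $\Pic(A/\cc)\rightarrow \Pic(B/\cc)$ is surjective, then $\Pic A\rightarrow \Pic B$ is surjective.} Indeed, given $[L]\in\Pic B$, surjectivity downstairs lets me lift the image of $L$ in $\Pic(B/\cc)$ to some $[M]\in\Pic(A/\cc)$; then $\delta([M],[L])=0$, so $([M],[L])$ comes from a class in $\Pic A$ whose image in $\Pic B$ is $[L]$.

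For part (2) this does the whole job. Since $A\subseteq B$ is birational and (under the standing finiteness hypotheses) $B$ is a finite $A$-module, the conductor $\cc$ is a nonzero ideal of the domain $A$, so $V(\cc)$ avoids the generic point and $\dim(A/\cc)\leq \dim A -1\leq 0$; as integral extensions preserve dimension, likewise $\dim(B/\cc)\leq 0$. Thus $A/\cc$ and $B/\cc$ are zero-dimensional Noetherian, hence artinian, rings, their Picard groups vanish, and the principle above gives surjectivity of $\Pic A\rightarrow \Pic B$ at once.

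For part (1) I would first reduce to the elementary case: a subintegral extension is a filtered union of elementary subintegral extensions and $\Pic$ commutes with filtered colimits of rings, so it suffices to treat $A\subseteq B=A[b]$ with $b^2,b^3\in A$. Here the conductor square is useless by itself, since $A/\cc\subseteq B/\cc$ is again elementary subintegral and the dimension need not drop, so the argument of part (2) becomes circular. Instead I would argue cohomologically. Because the extension is subintegral, $\operatorname{Spec}B\rightarrow\operatorname{Spec}A$ is a homeomorphism; writing $X$ for this common space, the short exact sequence of sheaves $1\rightarrow\mathcal O_A^{*}\rightarrow \mathcal O_B^{*}\rightarrow \mathcal C\rightarrow 1$ with $\mathcal C=\mathcal O_B^{*}/\mathcal O_A^{*}$ yields $\Pic A\rightarrow \Pic B\rightarrow H^1(X,\mathcal C)$, so surjectivity follows once $H^1(X,\mathcal C)=0$.

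This vanishing is the crux and the step I expect to be the main obstacle. The point is that $\mathcal C$ is supported on the affine non-normal locus $V(\cc)$ and, because $b$ satisfies $b^2,b^3\in A$, can be filtered by quasi-coherent $\mathcal O_X$-modules (additive rather than multiplicative data), whose higher cohomology on the affine scheme $X$ vanishes. As an alternative to carrying out this filtration, one may instead invoke the known Picard-surjectivity of subintegral (seminormalization) extensions in the sense of Swan and of the Roberts--Singh circle of results, which is exactly the assertion of part (1).
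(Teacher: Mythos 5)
Your part (2) has a genuine gap: the lemma carries no Noetherian or module-finiteness hypothesis, yet your argument needs the conductor $\cc=(A:_A B)$ to be nonzero, which you justify by appealing to ``standing finiteness hypotheses'' that this paper does not have. For Noetherian $A$ a nonzero conductor is \emph{equivalent} to $B$ being module-finite (any $0\neq a\in\cc$ gives $B\subseteq a^{-1}A$), and there exist one-dimensional Noetherian local domains (Akizuki, Nagata) whose normalization is integral and birational over them but not module-finite; for such extensions $\cc=0$ and your Milnor square degenerates, proving nothing. A second lapse in the same part: you call $A/\cc$ and $B/\cc$ ``Noetherian, hence artinian,'' again importing a hypothesis that is not there (the vanishing of $\Pic$ for arbitrary zero-dimensional rings is true, but it needs its own argument --- e.g. that rank-one projectives over zero-dimensional rings are free --- not the artinian structure theorem). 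The paper's proof of (2) avoids conductors entirely: writing $K$ for the common quotient field, the exact sequence $1\to U(A)\to U(K)\to\mathcal I(A,K)\to\Pic A\to\Pic K=0$ gives surjections $\mathcal I(A,K)\twoheadrightarrow\Pic A$ and $\mathcal I(B,K)\twoheadrightarrow\Pic B$, and the map $\varphi(A,B,K)\colon\mathcal I(A,K)\to\mathcal I(B,K)$, $I\mapsto IB$, is surjective by Proposition 2.3 of \cite{ss} precisely under the hypotheses ``integral, birational, $\dim A\leq 1$''; chasing the resulting commutative square forces $\Pic A\to\Pic B$ to be surjective, with no finiteness needed.

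For part (1) your reduction to the elementary subintegral case is sound ($\Pic$ commutes with filtered colimits, and surjectivity is stable under composition), but the cohomological core --- the vanishing $H^1(X,\mathcal O_B^{*}/\mathcal O_A^{*})=0$ via a filtration of the quotient sheaf by quasi-coherent pieces --- is only announced, and you yourself flag it as the main obstacle before falling back on citing the known Picard-surjectivity for subintegral extensions. That fallback is in fact exactly what the paper does: its whole proof of (1) is the citation of Proposition 7 of \cite{is}. So part (1) stands as a citation rather than as an independent argument, while part (2), as written, fails outside the module-finite case and therefore does not prove the lemma in the stated generality.
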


\begin{proof}
 (1) See Proposition 7 of \cite{is}.
 
 (2) Let $K$ be the quotient field of $A$ and $B$. We have the commutative diagram
 
 $$\begin{CD}
    \mathcal I(A, K)  @>>> \Pic A     @>>> 0    \\
    @V\varphi(A, B, K )VV   @V\rho VV    \\ 
    \mathcal I(B, K)  @>>> \Pic B     @>>> 0        \\
   \end{CD}$$
   
   where $\varphi(A, B, K )$ is surjective by Proposition 2.3 of \cite{ss}. Hence $\rho$ is surjective.
\end{proof}

\begin{lemma}(cf. \cite{Oy}, Lemma 1.4.)\label{2.6}
Let $A\subseteq  B$ be a ring extension with $B$ reduced  and  $A$  quasinormal in $B$. Then $A$ is subintegrally closed in $B$.
 
\end{lemma}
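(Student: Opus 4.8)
The plan is to translate the hypothesis on $M\Pic$ into the injectivity of a map on $N\Pic$, and then feed this into the $K$-theoretic complex $(1.1)$ together with the criterion of Lemma \ref{2.1}. The bridge between the two is that $N\Pic$ sits as a natural summand both inside $K\Pic$ (which governs subintegral closure) and inside $M\Pic$ (which governs quasinormality).

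First I would record that, since $B$ is reduced and $A\subseteq B$, the subring $A$ is reduced as well. Consequently $U(A[X])=U(A)$ and $U(B[X])=U(B)$, so that $KU(A)=KU(B)=0$. Next, using the canonical splitting $\Pic R[X]\cong \Pic R\oplus N\Pic R$ for $R=A,B$, I identify $K\Pic A$ with $N\Pic A$ and $K\Pic B$ with $N\Pic B$; under these identifications the map $\beta$ of $(1.1)$ becomes precisely the natural map $N\Pic A\to N\Pic B$ induced by $A\subseteq B$.

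The key step is to show that $\beta$ is injective. For this I invoke the naturality of Weibel's decomposition (Lemma \ref{we}): the isomorphism $M\Pic R\cong N\Pic R\oplus N\Pic R\oplus L\Pic R$ is functorial in $R$, so the map $M\Pic A\to M\Pic B$ is block diagonal, namely the direct sum of two copies of $N\Pic A\to N\Pic B$ and of $L\Pic A\to L\Pic B$. Because $A$ is quasinormal in $B$, this direct-sum map is injective, and injectivity of a direct sum of maps forces injectivity of each summand; in particular $N\Pic A\to N\Pic B$, i.e. $\beta$, is injective.

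Finally I would assemble these facts inside $(1.1)$. By Lemma \ref{2.3} the complex $(1.1)$ is exact at $K\mathcal I(A,B)$, and since $KU(B)=0$ this forces $\alpha$ to be injective. As $(1.1)$ is a complex we have $\beta\circ\alpha=0$, whence $\im\alpha\subseteq\ker\beta=0$ by injectivity of $\beta$; combined with injectivity of $\alpha$ this yields $K\mathcal I(A,B)=0$. By the reformulation of Lemma \ref{2.1}, the vanishing $K\mathcal I(A,B)=0$ is equivalent to $A$ being subintegrally closed in $B$, which completes the argument. I expect the only delicate point to be the compatibility of the various decompositions, namely checking that the $N\Pic$-component of the Weibel map $M\Pic A\to M\Pic B$ really coincides with the map $\beta$ coming from $(1.1)$; but both are the natural map $N\Pic A\to N\Pic B$, so the naturality of all the constructions involved settles it. Note also that this route avoids the one place where $(1.1)$ might fail to be exact (at $K\Pic A$), since I only use exactness at $K\mathcal I(A,B)$ together with the complex identity $\beta\circ\alpha=0$.
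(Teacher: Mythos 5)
Your proof is correct, and it follows the paper's skeleton: reduce to showing $K\mathcal I(A,B)=0$ via Lemma \ref{2.1}, note that $KU(A)=KU(B)=0$ since $B$ (hence $A$) is reduced, and then kill $K\mathcal I(A,B)$ inside the complex $(1.1)$ using injectivity of $\alpha$ (from exactness at $K\mathcal I(A,B)$) together with $\im\alpha\subseteq\ker\beta=0$. Where you genuinely diverge is the key step, the injectivity of $\beta\colon K\Pic A\to K\Pic B$. The paper simply imports this from the proof of Lemma 1.4 of Onoda--Yoshida \cite{Oy}, treating it as a black box. You instead derive it internally: you identify $K\Pic R\cong N\Pic R$ via the natural splitting of $\Pic R[X]\to \Pic R$ induced by $X\mapsto 0$, invoke the naturality of Weibel's decomposition (Lemma \ref{we}) to see that $M\Pic A\to M\Pic B$ is block diagonal, namely the direct sum of two copies of $N\Pic A\to N\Pic B$ and of $L\Pic A\to L\Pic B$, and observe that injectivity of a block-diagonal map forces injectivity of each block, so quasinormality yields exactly the needed injectivity of $\beta$. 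Your route is more self-contained, in that it uses only a result already quoted in the paper (Lemma \ref{we}) rather than the internals of an external proof, and it makes explicit the mechanism by which a condition on $M\Pic$ (quasinormality) controls a condition on $N\Pic$ (hence on $K\Pic$ and subintegral closedness); the paper's citation is shorter but forces the reader to consult \cite{Oy} for that mechanism. Both arguments, as you note, use only the complex identity $\beta\circ\alpha=0$ and exactness away from $K\Pic A$, so neither requires surjectivity of $\Pic A\to\Pic B$.
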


\begin{proof}
  We have not assumed $B$ to be a domain. By Lemma \ref{2.1}, it is enough to show that $K\mathcal I(A,B)=0 $. We have the sequence 
  $$
1\rightarrow  KU(A) \rightarrow  KU(B) \rightarrow  K \mathcal  I(A,B)\stackrel{\alpha} \rightarrow  K\Pic A \stackrel{\beta}\rightarrow  K\Pic B 
$$
which is exact except possibly at the place $K\Pic A$. Since $A$ and $B$ are reduced, $KU(A)=0$ and $KU(B) =0$. In the proof of Lemma 1.4 of \cite{Oy}, it is shown that the map  $K\Pic A \rightarrow  K\Pic B $ is injective, i.e., $\ker \beta=0$. We have $\im \alpha\subseteq \ker \beta$.  Hence $K\mathcal I(A, B)= 0$.
\end{proof}
\begin{remark}\rm{
 In the above lemma we cannot drop the condition that $B$ is reduced. For example, consider the extension $A=K\subsetneq B=K[b]$ with $b^{2}=0$, where $K$ is any field. Since $M\Pic K=0$,  clearly $A$ is quasinormal in $B$. But $A$ is not subintegrally closed in $B$, because   $b^{2}=b^{3}=0\in K$, $b\notin K$.}
\end{remark}

\begin{lemma}\label{2.5}
 Let $A\subseteq B$ be a ring extension with $B$ a domain. Then the following statements hold:
 \begin{enumerate}[(1)]
  \item If $A$ is quasinormal in $B$ then $M\mathcal I(A, B)= 0$. 
  \item Suppose the extension $A\subseteq B$ is integral and birational with $\dim A\leq 1$, and $M\mathcal I(A, B)= 0$. Then $A$ is quasinormal in $B$.
 \end{enumerate}

 \end{lemma}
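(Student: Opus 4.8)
The plan is to read both statements off the six-term chain complex (1.0), using the exactness information recorded in Lemma \ref{2.3} and picking out the relevant spot in each case.

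For part (1), I would first pin down the left-hand end of (1.0). Since $B$ is a domain and $A\subseteq B$, both $A$ and $B$ are domains (a subring of a domain is a domain), hence reduced. For any domain $R$ the units of $R[X,X^{-1}]$ are exactly the monomials $uX^n$ with $u\in U(R)$ and $n\in\mathbb{Z}$, because in a domain comparison of leading and trailing coefficients forces a unit Laurent polynomial to be a single monomial; thus $MU(R)=U(R[X,X^{-1}])/U(R)$ is infinite cyclic, generated by the class of $X$ (equivalently, by Remark \ref{LU1}, $MU(R)\cong H^0(R)=\mathbb{Z}$ since $\mathrm{Spec}\,R$ is connected). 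As the inclusion $A[X,X^{-1}]\hookrightarrow B[X,X^{-1}]$ sends $X$ to $X$, the induced map $MU(A)\to MU(B)$ carries generator to generator and is therefore surjective (in fact an isomorphism). Now quasinormality means $\varphi:M\Pic A\to M\Pic B$ is injective, i.e. $\ker\varphi=0$; hence $\im\eta\subseteq\ker\varphi=0$, so $\eta=0$ and $\ker\eta=M\mathcal I(A,B)$. By exactness of (1.0) at $M\mathcal I(A,B)$ (a place other than $M\Pic A$, so guaranteed by Lemma \ref{2.3}), the map $MU(B)\to M\mathcal I(A,B)$ is surjective. On the other hand, exactness at $MU(B)$ identifies $\ker\big(MU(B)\to M\mathcal I(A,B)\big)$ with the image of the surjection $MU(A)\to MU(B)$, namely all of $MU(B)$, so $MU(B)\to M\mathcal I(A,B)$ is the zero map. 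Combining these two facts gives $M\mathcal I(A,B)=0$.

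For part (2), the hypotheses ``integral, birational, domains, $\dim A\le 1$'' are exactly condition (2) of Lemma \ref{2.4}, which yields that $\Pic A\to\Pic B$ is surjective. By Lemma \ref{2.3} the complex (1.0) is then exact everywhere, including at $M\Pic A$, so $\im\eta=\ker\varphi$. Since $M\mathcal I(A,B)=0$ by hypothesis we get $\im\eta=0$, hence $\ker\varphi=0$; that is, $\varphi:M\Pic A\to M\Pic B$ is injective, which is precisely the statement that $A$ is quasinormal in $B$.

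The only step needing genuine care is the units computation underpinning part (1): one must verify that for a domain $U(R[X,X^{-1}])$ collapses to $U(R)\times\langle X\rangle$, so that $MU$ is cyclic and the comparison map $MU(A)\to MU(B)$ is onto. Everything else is a formal diagram chase through (1.0) invoking only the exactness guaranteed by Lemma \ref{2.3}, and part (2) is entirely formal once Lemma \ref{2.4}(2) supplies surjectivity of $\Pic A\to\Pic B$.
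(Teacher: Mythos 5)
Your proof is correct and is essentially the same as the paper's: both parts are read off the chain complex (1.0) using Lemma \ref{2.3} for exactness and Lemma \ref{2.4}(2) for part (2), with quasinormality entering as $\ker\varphi=0$. The only difference is that you spell out the details the paper leaves implicit, namely the computation $MU(R)\cong\mathbb{Z}$ (generated by the class of $X$) for a domain $R$ and the final chase showing $MU(B)\to M\mathcal I(A,B)$ is simultaneously surjective and zero; the paper compresses this into the single line $MU(A)=MU(B)\cong\mathbb{Z}$.
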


\begin{proof}
 (1) Since $A$ and $B$ are domains, $MU(A)= MU(B) \cong \mathbb Z$. By (1.0), $\im \eta \subseteq \ker \varphi $. As $A$ is quasinormal in $B$, $\ker \varphi =0$. This implies that $\im \eta =0$. We get $M\mathcal I(A, B)= 0$.
 
 (2) By Lemma \ref{2.4}(2) and Lemma \ref{2.3}, the sequence (1.0) is exact at $M\Pic A$ also. Since $M\mathcal I(A, B)= 0$, we get the result.
\end{proof}

\begin{theorem}\label{2.11}
  Let $A\subseteq B$ be an integral, birational extension of domains with $\dim A\leq 1$. Then $M\mathcal I(A, B)=0$ if and only if $A$ is subintegrally closed in $B$ and $A\subseteq B$ is anodal.
\end{theorem}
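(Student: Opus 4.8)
The plan is to reduce the whole statement to a characterization of quasinormality, for which Lemma \ref{2.5} is tailor-made. Under the standing hypotheses (an integral, birational extension of domains with $\dim A \le 1$), parts (1) and (2) of Lemma \ref{2.5} together say that $M\mathcal I(A,B)=0$ if and only if $A$ is quasinormal in $B$. So the theorem will follow at once as soon as I translate the single condition ``$A$ quasinormal in $B$'' into the pair ``$A$ subintegrally closed in $B$ and $A \subseteq B$ anodal.''

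For the forward implication I would argue as follows. Suppose $M\mathcal I(A,B)=0$; then $A$ is quasinormal in $B$ by Lemma \ref{2.5}(2). Since $B$ is a domain it is reduced, so Lemma \ref{2.6} immediately yields that $A$ is subintegrally closed in $B$. It then remains only to extract ``anodal'' from quasinormality, and here I would invoke the Onoda--Yoshida characterization (\cite{Oy}, Theorem 1.13), which for integral birational extensions of one-dimensional domains forces a quasinormal extension to be anodal.

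For the reverse implication, suppose $A$ is subintegrally closed in $B$ and $A \subseteq B$ is anodal. I would again appeal to (\cite{Oy}, Theorem 1.13) to conclude that $A$ is quasinormal in $B$, and then Lemma \ref{2.5}(1) gives $M\mathcal I(A,B)=0$. Combining the two implications completes the proof.

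The entire difficulty is concentrated in the equivalence between quasinormality and the pair (subintegrally closed, anodal): Lemma \ref{2.5} and Lemma \ref{2.6} are essentially diagram chases through the complex $(1.0)$, and the genuine content lies in the Onoda--Yoshida theorem, which rests on an analysis of $M\Pic$ (equivalently $L\Pic$ for the seminormal situation, via Remark \ref{we1}) for this class of extensions. The step I would scrutinize most carefully is exactly where the hypothesis $\dim A \le 1$ is consumed: it is needed both in Lemma \ref{2.5}(2) (via the surjectivity of $\Pic A \to \Pic B$ from Lemma \ref{2.4}(2), which makes $(1.0)$ exact at $M\Pic A$) and in the quasinormality characterization itself, and I would confirm that birationality plus one-dimensionality are precisely what make both directions of that characterization valid.
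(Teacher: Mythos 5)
Your proposal is correct and follows essentially the same route as the paper: both reduce the theorem to the equivalence ``$M\mathcal I(A,B)=0 \Leftrightarrow A$ quasinormal in $B$'' via Lemma \ref{2.5} and then invoke Onoda--Yoshida (\cite{Oy}, Theorem 1.13) to identify quasinormality with the pair (subintegrally closed, anodal); your extra appeal to Lemma \ref{2.6} is harmless but redundant, since Theorem 1.13 already yields subintegral closedness. The only point the paper treats that you skip is the degenerate case $\dim A=0$, where $A=B$ and the statement is trivial; this matters only because the Onoda--Yoshida theorem you cite is stated for one-dimensional rings.
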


\begin{proof}
 If $\dim A=0$ then $A = B$ and the assertion holds trivially in this case. If $\dim A= 1$ then by Theorem 1.13 of \cite{Oy}, $A$ is quasinormal in $B$ if and only if  $A$ is subintegrally closed in $B$ and $A\subseteq B$ is anodal. We also have $A$ is quasinormal in $B$ if and only if  $M\mathcal I(A, B)=0$ by Lemma \ref{2.5}. Combining these two results we get the assertion.
\end{proof}

Next, in Theorem \ref{2.10} and Remark \ref{example}, we show that in general, the conditions $A$ is subintegrally closed in $B$ and $A\subseteq B$ is anodal are necessary but not sufficient.

\begin{lemma}\label{2.8}
\begin{enumerate}[(1)]
 \item The diagram 
 $$\xymatrix{\mathcal I(A, B) \ar[r]^{\psi} \ar[rd]^{\phi} & \mathcal I(A[X], B[X])\ar[d]^{\theta}\\ & \mathcal I(A[X, X^{-1}], B[X, X^{-1}]) }$$
 is commutative.
 \item The maps $\phi$, $\psi$ and $\theta$ are injective.
 \item $\phi$ is an isomorphism if and only if $\psi$ and $\theta$ are isomorphisms.
 \item If $\phi $ is an isomorphism, i.e., $M \mathcal I(A, B)= 0$, then $A$ is subintegrally closed in $B$.
 
\end{enumerate}

\end{lemma}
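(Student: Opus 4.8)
The plan is to establish the four parts in sequence, using the factorization $\phi=\theta\circ\psi$ furnished by (1) as the organizing principle. For (1) I would simply evaluate the two routes around the triangle on a typical element: for $I\in\mathcal I(A,B)$ one has $\psi(I)=IA[X]$, and therefore $\theta(\psi(I))=(IA[X])A[X,X^{-1}]=IA[X,X^{-1}]=\phi(I)$, where the middle equality uses $A[X]\cdot A[X,X^{-1}]=A[X,X^{-1}]$. This settles commutativity immediately.

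For (2), injectivity of $\phi$ is exactly Lemma \ref{2.2}, so the work is to treat $\psi$ and $\theta$ by the same mechanism, the only new ingredient being the appropriate ``intersection with the coefficient ring'' identities. For $\psi$: if $I=(b_1,\dots,b_r)A\in\ker\psi$ then $IA[X]=A[X]$, forcing each $b_i\in A[X]\cap B=A$, so $I\subseteq A$; applying the same to $I^{-1}$ gives $I=A$. For $\theta$ the governing identity is $A[X,X^{-1}]\cap B[X]=A[X]$, which I would obtain by comparing coefficients in the free $B$-module $B[X,X^{-1}]=\bigoplus_{i\in\Zb}BX^{i}$: an element of $B[X]$ all of whose coefficients lie in $A$ and which involves no negative powers of $X$ must already lie in $A[X]$. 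The same two-sided argument then gives injectivity of $\theta$. I expect this coefficient bookkeeping for $\theta$ to be the only genuinely technical point, and it is elementary once the freeness of $B[X,X^{-1}]$ over $B$ is recorded.

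For (3), the backward direction is immediate from (1), since a composite of two isomorphisms is an isomorphism. For the forward direction, suppose $\phi$ is an isomorphism. The key observation is that the factorization forces surjectivity of $\theta$: indeed $\im\theta\supseteq\im(\theta\circ\psi)=\im\phi$ is everything, so $\theta$ is surjective, and combined with its injectivity from (2) it is an isomorphism. Then $\psi=\theta^{-1}\circ\phi$ is a composite of isomorphisms and hence an isomorphism as well.

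Finally, (4) is a corollary of (3) together with Lemma \ref{2.1}. If $\phi$ is an isomorphism, then by (3) the map $\psi\colon\mathcal I(A,B)\to\mathcal I(A[X],B[X])$ is an isomorphism, and Lemma \ref{2.1} then yields that $A$ is subintegrally closed in $B$.
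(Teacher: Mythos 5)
Your proposal is correct and follows essentially the same route as the paper: commutativity by the factorization $\phi=\theta\circ\psi$, injectivity of $\psi$ and $\theta$ by the kernel argument of Lemma \ref{2.2} (with the intersection identities $A[X]\cap B=A$ and $A[X,X^{-1}]\cap B[X]=A[X]$), the forward direction of (3) by the surjectivity-plus-injectivity chase, and (4) from (3) together with Lemma \ref{2.1}. The only difference is that you spell out the details the paper compresses into ``a similar argument'' and ``simple diagram chasing,'' which is fine.
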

\begin{proof} (1) Since the maps are natural,  the diagram is commutative.\hspace{1cm}

 (2) $\phi$ is injective by Lemma \ref{2.2}. The injectivity of $\psi$ and $\theta$  follows by a similar argument as in Lemma \ref{2.2}. \hspace{1cm}
 
 (3) If $\psi$ and $\theta$ are  isomorphisms then clearly $\phi $ is an isomorphism. Conversely, suppose $\phi $  is an isomorphism. Then by simple diagram chasing we get that $\psi$ and $\theta$ are isomorphisms.\
 
 (4) If $\phi $ is an isomorphism then $\psi$ is an isomorphism. Hence by Lemma \ref{2.1}, $A$ is subintegrally closed in $B$.
\end{proof}

\begin{lemma}\label{2.9}
 Let $\mathfrak{a}$ be a $B$-ideal contained in $A$. Then the homomorphism $M\mathcal I(A, B)\rightarrow M\mathcal I(A/\mathfrak{a}, B/\mathfrak{a})$ is an isomorphism. 
\end{lemma}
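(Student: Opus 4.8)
The plan is to reduce the statement to the analogous isomorphism for $\mathcal I$ itself, applied simultaneously to the given extension and to its Laurent polynomial extension, and then pass to cokernels. The ingredient I would establish first is the following key claim: for \emph{any} extension $A\subseteq B$ and \emph{any} $B$-ideal $\mathfrak{a}\subseteq A$, the natural map $\mathcal I(A,B)\to \mathcal I(A/\mathfrak{a}, B/\mathfrak{a})$ induced by the quotient $\pi\colon B\to B/\mathfrak{a}$ is an isomorphism. The first observation is that every invertible $A$-submodule $I\subseteq B$ already contains $\mathfrak{a}$: writing $I I^{-1}=A$ one gets $\mathfrak{a}=A\mathfrak{a}=I(I^{-1}\mathfrak{a})\subseteq I\mathfrak{a}\subseteq \mathfrak{a}$, since $\mathfrak{a}$ is a $B$-ideal and $I^{-1}\subseteq B$, forcing $I\mathfrak{a}=\mathfrak{a}$; and $I\mathfrak{a}\subseteq I$ because $\mathfrak{a}\subseteq A$, whence $\mathfrak{a}\subseteq I$. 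Consequently $I\mapsto \pi(I)=I/\mathfrak{a}$ is well defined, and injectivity is immediate: if $\pi(I)=A/\mathfrak{a}$ then, both $I$ and $A$ containing $\ker\pi=\mathfrak{a}$, we recover $I=\pi^{-1}\pi(I)=A$.

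For surjectivity I would lift an invertible $\bar J$ with inverse $\bar J^{-1}$ to $J=\pi^{-1}(\bar J)$ and $J'=\pi^{-1}(\bar J^{-1})$, both $A$-submodules of $B$ containing $\mathfrak{a}$, and show $JJ'=A$. From $\pi(JJ')=\bar J\bar J^{-1}=A/\mathfrak{a}$ one reads off both $JJ'\subseteq A$ and $JJ'+\mathfrak{a}=A$, so $JJ'$ is an ideal of $A$; it remains to upgrade $JJ'+\mathfrak{a}=A$ to $JJ'=A$. I would do this locally at each prime $\mathfrak p$ of $A$: if $\mathfrak{a}\not\subseteq\mathfrak p$ then $\mathfrak{a}^2\subseteq JJ'$ meets $A\setminus\mathfrak p$, so $(JJ')_{\mathfrak p}=A_{\mathfrak p}$; and if $\mathfrak{a}\subseteq\mathfrak p$ then $\mathfrak{a}_{\mathfrak p}$ lies in the maximal ideal of $A_{\mathfrak p}$, so $JJ'+\mathfrak{a}=A$ forces $(JJ')_{\mathfrak p}=A_{\mathfrak p}$. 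As $JJ'\subseteq A$ is locally all of $A$, we get $JJ'=A$, so $J$ is invertible (in particular finitely generated) with $\pi(J)=\bar J$, proving surjectivity and the key claim.

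Granting the claim, I would finish by applying it twice. Apply it to $A\subseteq B$, and apply it to $A[X,X^{-1}]\subseteq B[X,X^{-1}]$ with the $B[X,X^{-1}]$-ideal $\mathfrak{a}[X,X^{-1}]\subseteq A[X,X^{-1}]$, using $A[X,X^{-1}]/\mathfrak{a}[X,X^{-1}]\cong (A/\mathfrak{a})[X,X^{-1}]$ and the analogue for $B$. These two isomorphisms are the vertical maps of a commutative square whose horizontal maps are the natural maps $\phi$ defining $M\mathcal I(A,B)$ and $M\mathcal I(A/\mathfrak{a}, B/\mathfrak{a})$; commutativity is just naturality of all the maps involved. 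Passing to cokernels of the horizontal maps then yields $M\mathcal I(A,B)\cong M\mathcal I(A/\mathfrak{a}, B/\mathfrak{a})$.

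I expect the surjectivity step, specifically the passage from $JJ'+\mathfrak{a}=A$ to $JJ'=A$, to be the only real obstacle; the containment $\mathfrak{a}\subseteq I$, the injectivity, and the cokernel bookkeeping in the final reduction are all formal once that local argument is in hand. An alternative to the localization computation would be to invoke the properties of $\mathcal I$ recorded in [4, Section 2] should the base isomorphism already be available there, but the self-contained prime-by-prime argument above seems the cleanest route.
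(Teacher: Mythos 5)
Your proposal is correct and takes essentially the same route as the paper: the paper likewise reduces to the isomorphism $\mathcal I(A,B)\cong \mathcal I(A/\mathfrak{a}, B/\mathfrak{a})$, applied both to $A\subseteq B$ and to $A[X,X^{-1}]\subseteq B[X,X^{-1}]$ with the ideal $\mathfrak{a}[X,X^{-1}]$, and then passes to cokernels by a diagram chase. The only difference is that the paper cites this key isomorphism (Proposition 2.6 of Roberts--Singh) rather than proving it, whereas your self-contained argument for it --- the containment $\mathfrak{a}\subseteq I$, the resulting injectivity, and the localization step upgrading $JJ'+\mathfrak{a}=A$ to $JJ'=A$ --- is a correct proof of that cited result.
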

\begin{proof}
 Clearly, $\mathfrak {a}[X, X^{-1}]$ is a $B[X, X^{-1}]$-ideal contained in $A[X, X^{-1}]$. We have \\$\mathcal I(A, B) \cong \mathcal I(A/\mathfrak{a}, B/\mathfrak{a})$ and $\mathcal I(A[X, X^{-1}], B[X, X^{-1}]) \cong \mathcal I(A/\mathfrak{a}[X, X^{-1}], B/\mathfrak{a}[X, X^{-1}])$ by Proposition 2.6  of \cite{rs}. Now by chasing a suitable diagram we get the result.
\end{proof}

\begin{theorem}\label{2.10}
 Let $A\subseteq B$ be an integral, birational extension of domains. Suppose $M\mathcal I(A, B)=0$. Then  $A$ is subintegrally closed in $B$ and $A\subseteq B$ is anodal.
\end{theorem}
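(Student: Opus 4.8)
The statement has two parts, and the first is immediate: since $M\mathcal I(A,B)=0$ means that the map $\phi$ of Lemma \ref{2.2} is an isomorphism, Lemma \ref{2.8}(4) gives at once that $A$ is subintegrally closed in $B$. The whole content is therefore the anodality of $A\subseteq B$, which I would prove in contrapositive form: if $A$ is not anodal in $B$, then $M\mathcal I(A,B)\neq 0$. So suppose there is $b\in B\setminus A$ with $u:=b^{2}-b\in A$; since $b^{3}-b^{2}=b(b^{2}-b)=ub$, the second anodal condition says precisely $ub\in A$. Put $C:=A[b]$. Because $b$ satisfies the monic relation $b^{2}-b-u=0$, the extension $A\subseteq C$ is integral and birational, and $C$ is a domain lying between $A$ and $B$.

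The key step is to show $M\mathcal I(A,C)\neq 0$ by linearising the quadratic relation through a conductor reduction. Let $\mathfrak d:=uC$; using $b^{2}=b+u$ one checks $uC\subseteq A$, so $\mathfrak d$ is a $C$-ideal contained in $A$ with $u\in\mathfrak d$. By Lemma \ref{2.9}, $M\mathcal I(A,C)\cong M\mathcal I(A/\mathfrak d,\,C/\mathfrak d)$. Write $\bar A=A/\mathfrak d$, $\bar C=C/\mathfrak d$ and $e=\bar b$. Modulo $\mathfrak d$ the relation becomes $e^{2}-e=\bar u=0$, so $e$ is an idempotent of $\bar C$ with $\bar C=\bar A[e]$; moreover $e\notin\bar A$ (since $\mathfrak d\subseteq A$ and $b\notin A$ force $\bar b\notin\bar A$), and $e\neq 0,1$ because $\bar b\in\{0,1\}$ would put $b$ in $A$. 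Now $\mu:=1+(X-1)e=(1-e)+eX$ is a unit of $\bar C[X,X^{-1}]$ with inverse $(1-e)+eX^{-1}$, so it defines a class in $MU(\bar C)$. Under the canonical projection $MU(\bar C)\to H^{0}(\bar C)$ coming from Lemma \ref{LU}, this class maps to the indicator of the nonempty proper clopen set $\{e=1\}\subseteq\operatorname{Spec}\bar C$; as this clopen corresponds to the idempotent $e\notin\bar A$, the function is not pulled back from $\operatorname{Spec}\bar A$, whence $[\mu]\notin\operatorname{im}(MU(\bar A)\to MU(\bar C))$. By exactness of $(1.0)$ at $MU(\bar C)$ (Lemma \ref{2.3}), $[\mu]$ has nonzero image in $M\mathcal I(\bar A,\bar C)$, so $M\mathcal I(A,C)\neq 0$.

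It remains to transport this back to $B$, i.e.\ to show that the natural map $M\mathcal I(A,C)\to M\mathcal I(A,B)$ is injective; together with $M\mathcal I(A,B)=0$ this yields the contradiction. Here I would use the naturality of $(1.0)$ in the second variable with the first variable $A$ fixed. Since $A,C,B$ are domains, $MU(A)=MU(C)=MU(B)=\mathbb Z$ and the induced maps are the identity on $\mathbb Z$; in particular $MU(A)\to MU(C)$ is surjective, so $MU(C)\to M\mathcal I(A,C)$ is the zero map and $\eta\colon M\mathcal I(A,C)\hookrightarrow M\Pic A$ is injective. As both $\eta$'s target the same group $M\Pic A$ and commute with $M\mathcal I(A,C)\to M\mathcal I(A,B)$, any element of the kernel of the latter is annihilated by $\eta$ and hence is $0$. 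This gives the injectivity, forces $M\mathcal I(A,C)=0$, and contradicts the previous paragraph.

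The main obstacle is the middle paragraph: recognising that replacing $B$ by $C=A[b]$ and dividing by the $C$-ideal $uC$ converts the anodal obstruction $b$ into a genuine idempotent $e$, and then certifying that $e$ produces a nonzero class in $M\mathcal I$. The two delicate points are that $e\notin\bar A$ really forces the $H^{0}$-component of $\mu$ to be non-extended, and that the coincidence $MU(A)\cong MU(C)\cong MU(B)\cong\mathbb Z$ is exactly what upgrades naturality of $(1.0)$ into injectivity of $M\mathcal I(A,C)\to M\mathcal I(A,B)$.
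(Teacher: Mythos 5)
Your proof is correct, and it takes a genuinely different route from the paper's. After the common first step (Lemma \ref{2.8}(4)), the paper proves anodality by invoking Lemma 1.10 of Onoda--Yoshida, which characterizes anodal extensions by injectivity of $M\Pic A\rightarrow M\Pic (A/\cc)\times M\Pic C$ for every finite intermediate ring $C$ with conductor $\cc$; it verifies this by proving $M\mathcal I(A,C)\rightarrow M\mathcal I(A,B)$ injective via a direct computation with generators, applying Lemma \ref{2.9} to the conductor, and then running the $M$-version of the units--Pic sequence of the conductor square. You instead argue contrapositively and build an explicit obstruction: from a witness $b$ of non-anodality you form $C=A[b]$, observe that $\mathfrak d=uC$ is a $C$-ideal inside $A$ (precisely because $u=b^2-b$ and $ub=b^3-b^2$ lie in $A$), pass to $\bar A\subseteq \bar C$ where $b$ becomes an idempotent $e\notin \bar A$, and read off from the unit $(1-e)+eX$ a class of $MU(\bar C)$ outside the image of $MU(\bar A)$; Lemma \ref{2.9} and exactness of (1.0) then give $M\mathcal I(A,C)\neq 0$, and your embedding $M\mathcal I(A,C)\hookrightarrow M\mathcal I(A,B)$ (obtained from $MU(A)\cong MU(C)\cong MU(B)\cong \mathbb Z$ and exactness of (1.0), rather than from the paper's generator computation) yields the contradiction. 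Your approach avoids both Onoda--Yoshida's Lemma 1.10 and the conductor Mayer--Vietoris sequence, and it exhibits a concrete nonzero class witnessing $M\mathcal I(A,B)\neq 0$; the paper's approach is shorter given those references, and its injectivity claim holds for arbitrary intermediate rings, whereas yours uses the domain hypothesis (available here). Two points in your middle paragraph deserve to be written out: first, the decomposition of Lemma \ref{LU} must be used \emph{naturally} in the ring, so that the $H^{0}$-components are compatible under $MU(\bar A)\rightarrow MU(\bar C)$; second, to rule out that the indicator of $\{e=1\}$ is a pullback, use that ${\rm Spec}\,\bar C\rightarrow {\rm Spec}\,\bar A$ is surjective (by integrality), so any preimage function would be $\{0,1\}$-valued, hence the indicator of a clopen set given by an idempotent $f\in \bar A$; then $\{f=1\}=\{e=1\}$ forces $f=e$ (two idempotents with the same clopen support are equal), contradicting $e\notin \bar A$.
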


\begin{proof}
  By Lemma \ref{2.8}(4), $A$ is subintegrally closed in $B$. To prove $A\subseteq B$ is anodal,  by Lemma 1.10 of \cite{Oy}, it is enough to show that for every intermediate ring $C$ between $A$ and $B$ such that $C$ is a finite $A$-module, the map $M\Pic A\rightarrow M\Pic (A/\cc) \times M\Pic C$ is injective, where $\cc$ is the conductor  of $C$ in $A$. We first claim that the map $\tau : M\mathcal I(A, C)\rightarrow M\mathcal I(A, B)$ is injective, where $C$ is any intermediate ring between $A$ and $B$.\
  
  We have the commutative diagram
   $$\begin{CD}
 1    @>>> \mathcal I(A,C )    @>>> \mathcal I(A[X, X^{-1}], C[X, X^{-1}])     @>>> M\mathcal I(A, C) @>>> 1    \\
 @.                         @VVV                   @VVV                 @VV\tau V                      @.\\
 1    @>>>   \mathcal I(A, B)    @>\phi>> \mathcal I(A[X, X^{-1}], B[X, X^{-1}])     @>>> M\mathcal I(A, B) @>>>1     
 \end{CD}$$
 where the first two vertical arrows are natural inclusions (because any invertible $A$-submodule of $C$ is also an  invertible $A$-submodule of $B$).
  
  Let $\bar{J}\in \ker \tau$, where $J\in \mathcal I(A[X, X^{-1}], C[X, X^{-1}])$. Then $J\in \im \phi$ and there exists $J_{1}\in \mathcal I(A, B)$ such that $J_{1}A[X, X^{-1}]= J$. Let $J_{1}= (b_{1}, b_{2},..., b_{r})A$ and $J= (f_{1}, f_{2},..., f_{s})A[X, X^{-1}]$ where $b_{i}\in B$ and $f_{i}\in C[X, X^{-1}]$. Then clearly $b_{i}\in B\cap C[X, X^{-1}]= C$ for all $i$. So $J_{1}\subseteq C$. Also $J_{1}^{-1}\subseteq C$. This implies that $J_{1}\in \mathcal I(A, C)$.  So $\bar{J} =0$. This proves the claim.\
   
   Since $M\mathcal I(A, B) =0$, $M\mathcal I(A, C)=0$. By Lemma \ref{2.9},  $M\mathcal I(A/\cc, C/\cc)=0$, where $\cc$ is the  conductor of $C$ in $A$. By (1.0), we have $MU(A)\cong MU(C)$ and $MU(A/\cc)\cong MU(C/\cc)$. Now the result follows from the following exact sequence which we obtain by applying M to the unit-Pic sequence (\cite{wei 1}, Theorem 3.10),
 $$MU(A)\rightarrow MU(A/\cc) \times MU(C)\rightarrow MU(C/\cc)\rightarrow M\Pic A\rightarrow M\Pic (A/\cc)\times M\Pic C$$
 \end{proof}

\begin{remark}\label{example}\rm{
The converse of the above theorem holds for $\dim A\leq 1$ as seen in Theorem \ref{2.11}. In general, the converse does not hold. This is seen by considering Example 3.5 of C. Weibel \cite{wei}. In that example $A$ is a 2-dimensional noetherian domain whose integral closure is $B= K[X, Y]$, where $K$ is a field. So $A\subseteq B$ is an integral, birational extension. By Proposition 3.5.2 of \cite{wei}, $A\subseteq B$ is anodal and $A$ is subintegrally closed in $B$. Since $B$ is a UFD, $\Pic B = \Pic B[T, T^{-1}]= 0$. Then we get the exact sequence

$$
 1\rightarrow  MU(A) \rightarrow  MU(B) \rightarrow  M \mathcal  I(A,B) \rightarrow  M \Pic A \rightarrow  0.
$$
As $A$, $B$ are domains, $MU(A)= MU(B)\cong \mathbb Z$. So $M \mathcal I(A, B)\cong M\Pic A$. By Remark \ref{we1},  $L\Pic A\cong M\Pic A$.  Hence by Proposition 3.5.2  of \cite{wei},  $M\mathcal I(A, B)\neq 0$.}
\end{remark}

\section{Some observations on $M\mathcal I(A, B)$}
In this section we discuss some properties of the cokernel $M\mathcal I(A, B)$ in the general case.

Recall from [6, Section 3] that for any extensions $A\subseteq C \subseteq B$  of rings, we have the exact sequence 
$$ 1\rightarrow \mathcal I(A, C) \rightarrow \mathcal I(A, B) \stackrel{\varphi(A, C, B)}\rightarrow \mathcal I(C, B) $$
where the map $\varphi(A, C, B)$ is given by $\varphi(A, C, B)(I)= IC$.

Now it is natural to ask  under what conditions on $A\subseteq B$ the map $\varphi(A, C, B)$ is surjective. In \cite{bs}, Singh has proved that if $B$ is subintegral over $A$ then the map $\varphi(A, C, B)$ is surjective. In the next Proposition  we generalize Singh's result as follows: 

\begin{proposition}\label{3.5}
 For all rings $C$ between $A$ and $B$ such that $C$ is subintegral over $A$, the map $\varphi(A, C, B)$ is surjective.
\end{proposition}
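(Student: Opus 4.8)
The plan is to bypass any explicit construction of preimages and instead deduce the surjectivity of $\varphi(A,C,B)$ from the surjectivity of the base-change map $\rho:\Pic A\to\Pic C$, $[P]\mapsto [P\otimes_A C]$, which is exactly what Lemma \ref{2.4}(1) supplies for the subintegral extension $A\subseteq C$. The engine is the Roberts--Singh unit--Picard sequence applied to both $A\subseteq B$ and $C\subseteq B$. Note that only the hypothesis ``$C$ is subintegral over $A$'' will be used, and nothing is assumed about the extension $C\subseteq B$.

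First I would assemble the commutative ladder coming from the naturality of that sequence,
$$\begin{CD}
U(B) @>>> \mathcal I(A,B) @>\pi_A>> \Pic A @>>> \Pic B \\
@| @VV\varphi V @VV\rho V @| \\
U(B) @>>> \mathcal I(C,B) @>\pi_C>> \Pic C @>>> \Pic B
\end{CD}$$
writing $\pi_A:\mathcal I(A,B)\to\Pic A$ and $\pi_C:\mathcal I(C,B)\to\Pic C$ for the two connecting maps and $\varphi=\varphi(A,C,B)$. The leftmost square commutes since $\varphi(uA)=uC$ for $u\in U(B)$; the middle square commutes because $IC\cong I\otimes_A C$ as $C$-modules for every invertible $A$-submodule $I$ (this identification is precisely what makes $\varphi$ land in $\mathcal I(C,B)$), so that $\pi_C(\varphi(I))=[IC]=\rho(\pi_A(I))$. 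I would also record that the composite $\Pic A\xrightarrow{\rho}\Pic C\to\Pic B$ is the natural map $\Pic A\to\Pic B$.

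The surjectivity is then a diagram chase. Given $J\in\mathcal I(C,B)$, exactness of the lower row at $\Pic C$ gives $\pi_C(J)\in\ker(\Pic C\to\Pic B)$. Using the surjectivity of $\rho$, pick $\xi\in\Pic A$ with $\rho(\xi)=\pi_C(J)$; its image in $\Pic B$ equals that of $\pi_C(J)$ and hence vanishes, so $\xi\in\ker(\Pic A\to\Pic B)=\im\pi_A$ by exactness of the upper row. Choose $I_0\in\mathcal I(A,B)$ with $\pi_A(I_0)=\xi$. Then $\pi_C(\varphi(I_0))=\rho(\xi)=\pi_C(J)$, whence $\varphi(I_0)\,J^{-1}\in\ker\pi_C=\im\bigl(U(B)\to\mathcal I(C,B)\bigr)$, i.e. $\varphi(I_0)\,J^{-1}=uC$ for some $u\in U(B)$, so that $\varphi(I_0)=uJ$. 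Since $u\in U(B)$ we have $u^{-1}A\in\mathcal I(A,B)$, and therefore $\varphi(u^{-1}A\cdot I_0)=u^{-1}\varphi(I_0)=u^{-1}(uJ)=J$, exhibiting $J$ in the image of $\varphi(A,C,B)$.

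The only non-formal ingredient is Lemma \ref{2.4}(1); everything else is exactness and naturality of the unit--Picard sequence. The step I expect to need the most care is verifying commutativity of the middle square, that is, the natural isomorphism $IC\cong I\otimes_A C$ for invertible $I$, since this is what aligns $\pi_C\circ\varphi$ with $\rho\circ\pi_A$ and thereby transfers information from $\Pic$ back to $\mathcal I$. I would emphasize that this argument never uses the filtered-union description of subintegrality, so there is no need to reduce to an elementary step $C=A[b]$ with $b^2,b^3\in A$; the full statement follows in one stroke, and it generalizes Singh's result precisely because surjectivity of $\Pic A\to\Pic C$ is available for any subintegral $A\subseteq C$.
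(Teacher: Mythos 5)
Your proof is correct and is essentially the paper's own argument: the same commutative ladder of unit--Picard sequences for $A\subseteq B$ and $C\subseteq B$, with the surjectivity of $\rho:\Pic A\to \Pic C$ supplied by Lemma \ref{2.4}(1), followed by a diagram chase. The only difference is that you write out explicitly the chase (and the unit adjustment $\varphi(u^{-1}A\cdot I_0)=J$) which the paper leaves to the reader.
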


\begin{proof}
 We have the commutative diagram
 $$\begin{CD}
  1  @>>> U(A)  @>>> U(B)  @>>> \mathcal I(A, B) @>>> \Pic A   @>>> \Pic B  \\
  @.      @VVV        @VV=V         @VV\varphi(A,C, B) V             @VV\rho V   @VV=V  @. \\
  1  @>>> U(C)  @>>> U(B)  @>>>  \mathcal I(C, B) @>>> \Pic C  @>>> \Pic B  \\       
 \end{CD}$$
 
 Since $\rho$ is surjective by Lemma \ref{2.4}(1), the result follows by chasing the diagram. 
\end{proof}

The following result gives another case where the map  $\varphi(A, C, B)$ is surjective.

Recall that a local ring $A$ is {\bf Hensel} if every finite $A$-algebra $B$ is a direct product of local rings.

\begin{proposition}\label{3.6}
 Let $A\subseteq B$ be an integral extension  with $A$ Hensel local. Then for all rings $C$ with $A\subseteq C\subseteq B$ the map $\varphi(A, C, B)$ is surjective.
\end{proposition}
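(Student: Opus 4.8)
The plan is to imitate the proof of Proposition \ref{3.5}. Writing down the unit--Pic exact sequence for the pairs $A\subseteq B$ and $C\subseteq B$ and comparing them via the inclusion $A\subseteq C$ gives the commutative diagram with exact rows
$$\begin{CD}
  1  @>>> U(A)  @>>> U(B)  @>>> \mathcal I(A, B) @>>> \Pic A   @>>> \Pic B  \\
  @.      @VVV        @VV=V         @VV\varphi(A,C,B) V             @VV\rho V   @VV=V  @. \\
  1  @>>> U(C)  @>>> U(B)  @>>>  \mathcal I(C, B) @>>> \Pic C  @>>> \Pic B  \\
 \end{CD}$$
in which the two rows carry the identity of $U(B)$ and of $\Pic B$, and $\rho\colon \Pic A\to \Pic C$ is the canonical map. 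As in Proposition \ref{3.5}, a routine chase shows that $\varphi(A,C,B)$ is surjective as soon as $\rho$ is surjective: given $J\in\mathcal I(C,B)$, its class in $\Pic C$ maps to $0$ in $\Pic B$, lifts through $\rho$ to a class in $\Pic A$ which (by commutativity of the right-hand square) also maps to $0$ in $\Pic B$, and hence comes from some $I\in\mathcal I(A,B)$; then $J\cdot\varphi(A,C,B)(I)^{-1}$ has trivial class in $\Pic C$, so it equals $uC$ for a unit $u\in U(B)$, whence $J=\varphi(A,C,B)(uI)$.

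Thus everything reduces to proving that $\rho$ is surjective. Since $A$ is local we have $\Pic A=0$, so it in fact suffices to prove $\Pic C=0$, and this is where the hypotheses enter. First, $C$ is integral over $A$, being a subring of the integral extension $B$. I would then write $C$ as the filtered union of the subrings $A[c_{1},\dots,c_{n}]$ over all finite subsets $\{c_{1},\dots,c_{n}\}\subseteq C$; each such subring is integral and finitely generated over $A$, hence a finite $A$-module, i.e.\ a finite $A$-algebra. Because $A$ is Hensel local, the definition gives that each of these finite $A$-algebras is a finite direct product of local rings, so each has trivial Picard group. Finally, since $\Pic$ commutes with filtered colimits of rings, $\Pic C=\varinjlim \Pic\bigl(A[c_{1},\dots,c_{n}]\bigr)=0$, as desired, and then $\rho$ is trivially surjective.

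The genuinely new ingredient, and the step I expect to be the crux, is the identification $\Pic C=0$: it combines the Hensel property of $A$ (used to turn an arbitrary finite $A$-subalgebra into a product of local rings, whose Picard group vanishes) with the passage to a filtered colimit (needed because $C$ is only assumed integral, not finite, over $A$). Once $\Pic C=0$ is established the remaining diagram chase is entirely formal and is identical to the one already carried out for Proposition \ref{3.5}, so no further work is required there.
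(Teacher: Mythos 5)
Your proof is correct, and it reaches the result by a route that shares the paper's core mechanism but organizes the limit argument differently. The paper never proves $\Pic C=0$ for the full intermediate ring $C$: instead it invokes Lemma 2.2 of \cite{ss} to reduce surjectivity of $\varphi(A,C,B)$ to surjectivity of $\varphi(A,D,B)$ for finitely generated $A$-subalgebras $D\subseteq C$ (that lemma is itself a direct-limit argument, carried out at the level of invertible modules rather than Picard groups). For such a $D$, integrality forces $D$ to be a finite $A$-module, the Hensel hypothesis makes $D$ a finite product of local rings, so $\Pic A=\Pic D=0$; the unit--Pic sequence then identifies $\mathcal I(A,B)=U(B)/U(A)$ and $\mathcal I(D,B)=U(B)/U(D)$, and surjectivity is immediate, with no diagram chase involving $\rho$ at all. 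Your version moves the filtered-colimit step from $\mathcal I$ to $\Pic$: you use the standard (but external to the paper) fact that $\Pic$ commutes with filtered colimits of rings to conclude $\Pic C=0$ outright, and then run the chase already implicit in Proposition \ref{3.5}. What your route buys is the stronger intermediate statement $\Pic C=0$ for \emph{every} intermediate ring, plus independence from \cite{ss}, Lemma 2.2; what the paper's route buys is that it uses only facts it already cites and ends with a one-line identification of the $\mathcal I$-groups as quotients of unit groups. The crux is identical in both: Hensel local plus finite algebra gives a product of local rings, hence trivial Picard group, and a limit argument handles the passage from finite subalgebras to the arbitrary integral extension.
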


\begin{proof}By Lemma 2.2 of \cite{ss}, it is enough to show that $ \varphi (A,D,B)$ is surjective for every subring $D$ of  $C$ containing $A$ such that $D$ is finitely generated as an $A$-algebra. Let such a ring $D$ be given. Since $D$ is integral over $A$,  $D$ is a finite $A$-algebra. As $A$ is Hensel, $D$ is a finite  direct product of local rings. Then $\Pic A$ and $\Pic D$ are both trivial. This implies that $\mathcal I(A, B)= U(B)/U(A)$,  $\mathcal I(D, B)= U(B)/U(D)$ and clearly $\varphi(A, D, B)$ is surjective.
\end{proof}

\begin{proposition}\label{3.7}
 Let $A\subseteq C \subseteq B $ be  extensions of rings with $A\subseteq C$ subintegral. Then the sequence 
 $$ 1\rightarrow M\mathcal I(A,C) \rightarrow M\mathcal I(A, B) \rightarrow M\mathcal I(C, B) \rightarrow 1$$
 is exact. 
\end{proposition}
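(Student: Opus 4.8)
The plan is to realise the desired sequence as the row of cokernels of a morphism between two short exact sequences, and then to read it off from the snake lemma. The starting point is the exact sequence recalled at the opening of Section~3,
$$1\rightarrow \mathcal I(A,C) \rightarrow \mathcal I(A,B) \stackrel{\varphi(A,C,B)}\rightarrow \mathcal I(C,B).$$
Since $A\subseteq C$ is subintegral, Proposition~\ref{3.5} makes $\varphi(A,C,B)$ surjective, so this is in fact a short exact sequence. I would then write down the analogous sequence for the Laurent extensions $A[X,X^{-1}]\subseteq C[X,X^{-1}]\subseteq B[X,X^{-1}]$ and join the two by the natural inclusions $\phi$ of Lemma~\ref{2.2} as vertical maps. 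The two resulting squares commute by naturality of $\varphi$ and of $\mathcal I$: on the right-hand square both composites send $I\mapsto I\,C[X,X^{-1}]$, and the left-hand square is the evident compatibility of the inclusions.

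The crux, and the only step that is not purely formal, is to ensure that the bottom row is again short exact. Left exactness is automatic, but surjectivity of $\varphi(A[X,X^{-1}],C[X,X^{-1}],B[X,X^{-1}])$ needs Proposition~\ref{3.5} applied to the Laurent extension, and for that I first need that $A[X,X^{-1}]\subseteq C[X,X^{-1}]$ is subintegral. This is where I expect the main (if minor) obstacle, and I would dispose of it by checking that subintegrality is stable under the functor $(-)[X,X^{-1}]$. For an elementary step $C=A[b]$ with $b^{2},b^{3}\in A$ this is immediate, since then $b^{2},b^{3}\in A\subseteq A[X,X^{-1}]$ and $C[X,X^{-1}]=A[X,X^{-1}][b]$ is again elementary subintegral; the general case follows by fitting the finitely many coefficients of a given element of $C[X,X^{-1}]$ into a common finite tower of elementary subintegral extensions. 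With this in hand, Proposition~\ref{3.5} gives the surjectivity and hence the exactness of the bottom row.

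Now both rows are short exact and, by Lemma~\ref{2.2}, all three vertical maps $\phi$ are injective. Applying the snake lemma produces the six-term exact sequence
$$1\rightarrow \ker\rightarrow \ker\rightarrow \ker\rightarrow \operatorname{coker}\rightarrow \operatorname{coker}\rightarrow \operatorname{coker}\rightarrow 1,$$
in which the three kernels vanish and the three cokernels are, by definition, $M\mathcal I(A,C)$, $M\mathcal I(A,B)$ and $M\mathcal I(C,B)$. The vanishing of the kernels forces the connecting homomorphism to be trivial, so the induced map $M\mathcal I(A,C)\rightarrow M\mathcal I(A,B)$ is injective, while surjectivity of the bottom-right map $\varphi$ yields surjectivity of $M\mathcal I(A,B)\rightarrow M\mathcal I(C,B)$. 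Collecting these, the snake sequence collapses exactly to
$$1\rightarrow M\mathcal I(A,C) \rightarrow M\mathcal I(A,B) \rightarrow M\mathcal I(C,B) \rightarrow 1,$$
as required. The whole argument is formal once the stability of subintegrality under $(-)[X,X^{-1}]$ is secured, so I would present that verification first and then let the diagram chase run.
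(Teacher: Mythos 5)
Your proof is correct and is essentially the paper's own argument: the paper chases the transpose of your diagram (a $3\times 3$ array whose first two columns are your two rows and whose rows are the defining cokernel sequences for $M\mathcal I$), resting on exactly the same ingredients — Proposition \ref{3.5} applied to both $A\subseteq C$ and $A[X,X^{-1}]\subseteq C[X,X^{-1}]$, and the injectivity from Lemma \ref{2.2}. The only cosmetic differences are that you package the chase as the snake lemma and that you spell out the stability of subintegrality under $(-)[X,X^{-1}]$, which the paper asserts without proof.
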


\begin{proof}
 Consider the commutative diagram
 
\tiny
$$\xymatrix @R=.35in @C=.25in{ & 1\ar[d] & 1\ar[d] & 1\ar[d]  \\
 1 \ar[r] & \mathcal I(A, C)   \ar[r] \ar[d] & \mathcal I(A[X, X^{-1}],C[X, X^{-1}]) \ar[r] \ar[d] & M\mathcal I(A, C) \ar[r] \ar[d] & 1 \\
 1 \ar[r] & \mathcal I(A, B)   \ar[r] \ar[d] & \mathcal I(A[X, X^{-1}],B[X, X^{-1}]) \ar[r] \ar[d] & M\mathcal I(A, B) \ar[r] \ar[d] & 1 \\
 1 \ar[r] & \mathcal I(C, B)   \ar[r] \ar[d] & \mathcal I(C[X, X^{-1}],B[X, X^{-1}]) \ar[r] \ar[d] & M\mathcal I(C, B) \ar[r] \ar[d] & 1 \\
 & 1 & 1 & 1 
  }$$
  \normalsize
where the rows are clearly exact. Since $A\subseteq C$ is subintegral, so is $A[X, X^{-1}]\subseteq C[X, X^{-1}]$. Therefore by Proposition \ref{3.5}, the first two columns are exact.  Hence exactness of the last column follows by chasing the diagram.
 \end{proof}
 
 \begin{corollary}\label{Lnew}
  Let $A\subseteq B$ be a ring extension and let $^{^+}\!\!\!A$  denote the subintegral closure of $A$ in $B$. Then the sequence $$ 1\rightarrow M\mathcal I(A, ^{^+}\!\!\!A) \rightarrow M\mathcal I(A, B) \rightarrow M\mathcal I(^{^+}\!\!\!A, B)\rightarrow 1$$ is exact.
 \end{corollary}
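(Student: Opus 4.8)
The plan is to read off this corollary as the special case $C = {}^{^+}\!\!\!A$ of Proposition \ref{3.7}. The entire content is the observation that the subintegral closure provides an intermediate ring satisfying the subintegrality hypothesis required there, so almost no new work is needed.

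First I would recall what the subintegral closure is. By definition $^{^+}\!\!\!A$ is the largest subring of $B$ containing $A$ for which the extension $A\subseteq {}^{^+}\!\!\!A$ is subintegral; in particular $A\subseteq {}^{^+}\!\!\!A$ is itself a subintegral extension. Thus we have a chain of ring extensions
$$
A\subseteq {}^{^+}\!\!\!A \subseteq B
$$
in which the lower extension $A\subseteq {}^{^+}\!\!\!A$ is subintegral. These are exactly the hypotheses of Proposition \ref{3.7}, with $C$ taken to be $^{^+}\!\!\!A$.

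Next I would simply invoke Proposition \ref{3.7} with $C={}^{^+}\!\!\!A$. Its conclusion, after this substitution, is the exactness of
$$
1\rightarrow M\mathcal I(A, {}^{^+}\!\!\!A) \rightarrow M\mathcal I(A, B) \rightarrow M\mathcal I({}^{^+}\!\!\!A, B)\rightarrow 1,
$$
which is precisely the assertion to be proved.

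There is no genuine obstacle here: the only point deserving a moment's attention is the verification that $A\subseteq {}^{^+}\!\!\!A$ is a subintegral extension, and this is immediate from the defining property of the subintegral closure. Everything else is a direct citation of Proposition \ref{3.7}, so the proof is a one-line specialization.
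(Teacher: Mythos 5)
Your proposal is correct and coincides with the paper's own proof, which also reads the corollary as the instance $C={}^{^+}\!\!\!A$ of Proposition \ref{3.7}, the only point to check being that $A\subseteq {}^{^+}\!\!\!A$ is subintegral by definition of the subintegral closure. The paper simply says ``Immediate from Proposition \ref{3.7}''; your write-up makes the same one-line specialization explicit.
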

 
 \begin{proof}
  Immediate from Proposition \ref{3.7}.
\end{proof}

\begin{proposition}\label{4.8}
  Let $A\subseteq B$ be a ring extension. Assume that $A$ is subintegrally closed in $B$.   Then
  
  (1) $M\mathcal I(A, B) \cong  M\mathcal I(A[T], B[T])$.
  
  (2) $M\mathcal I(A, B)$ is a torsion-free abelian group if $B$ is a seminormal ring. 
  
  (3) $M\mathcal I(A, B)$ is a free abelian group if $B$ is a seminormal ring and $A$ is Hensel local.
  
  (4) $M\mathcal I(A, B)=0$ if $B$ is a seminormal domain and $A$ is Hensel local.
 \end{proposition}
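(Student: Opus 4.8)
The plan is to handle (1) by a direct cokernel comparison and to read (2)--(4) off the five-term complex (1.0), so I begin by recording a setup common to (2)--(4). Since $A$ is subintegrally closed in $B$ and $B$ is seminormal, $A$ is itself seminormal, so both $A$ and $B$ are reduced; hence by Remark \ref{LU1} I may identify $MU(A)\cong H^0(A)$ and $MU(B)\cong H^0(B)$, and by Swan's theorem \cite{swan} (Remark \ref{we1}) one has $N\Pic A=0=N\Pic B$, so $M\Pic A\cong L\Pic A$ and $M\Pic B\cong L\Pic B$. By Lemma \ref{2.3} the complex (1.0) is exact at $MU(A)$, $MU(B)$ and $M\mathcal I(A,B)$, which is all I will use.

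For (1) the point is that subintegral closedness is inherited by Laurent extensions: if $A$ is subintegrally closed in $B$, then so is $A[X,X^{-1}]$ in $B[X,X^{-1}]$. I would deduce this from the classical fact that the relative subintegral closure commutes with polynomial extension and with localization (\cite{swan}): first $A[X]$ is subintegrally closed in $B[X]$, and then, localizing at the powers of $X$, so is $A[X]_{X}=A[X,X^{-1}]$ in $B[X,X^{-1}]$. Granting this, Lemma \ref{2.1}, applied in an auxiliary variable $T$ to the two extensions $A\subseteq B$ and $A[X,X^{-1}]\subseteq B[X,X^{-1}]$, furnishes isomorphisms $\mathcal I(A,B)\xrightarrow{\sim}\mathcal I(A[T],B[T])$ and $\mathcal I(A[X,X^{-1}],B[X,X^{-1}])\xrightarrow{\sim}\mathcal I(A[T][X,X^{-1}],B[T][X,X^{-1}])$. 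By naturality these are the vertical maps of a commutative square whose horizontal maps are the Laurent inclusions defining $M\mathcal I(A,B)$ and $M\mathcal I(A[T],B[T])$; as both verticals are isomorphisms, the induced map of cokernels is too, which is exactly (1).

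For (2)--(4) I would extract from (1.0) the short exact sequence
$$1\to MU(B)/MU(A)\to M\mathcal I(A,B)\xrightarrow{\ \eta\ }\im\eta\to 1,\qquad \im\eta\subseteq M\Pic A.$$
The subgroup $MU(B)/MU(A)=H^0(B)/H^0(A)$ is always torsion-free: if $nf$ lies in the image of $H^0(A)$ for some $f\in H^0(B)$, then dividing values by $n$ shows $f$ does too. For (2) it then suffices that $\im\eta\subseteq M\Pic A\cong L\Pic A$ be torsion-free, which holds because $L\Pic$ of a seminormal ring is torsion-free (a structural property of the contracted functor $\Pic$, \cite{wei}); an extension of a torsion-free group by a torsion-free group is torsion-free, giving (2). (Alternatively, torsion-freeness can be seen from the substitution $X\mapsto X^{n}$, which acts on $M\mathcal I(A,B)$ as multiplication by $n$ and is injective by faithfully flat descent.) For (3) the extra hypothesis that $A$ is Hensel local forces $M\Pic A\cong L\Pic A=0$ (see below); the sequence then collapses to $M\mathcal I(A,B)\cong H^0(B)/H^0(A)$, a free abelian group, since $H^0(B)$ is free (the group of continuous $\Zb$-valued functions on the spectral space $\mathrm{Spec}\,B$ is free, by N\"obeling's theorem) and the constant subgroup $H^0(A)=\Zb$ splits off by evaluation at any one connected component of $\mathrm{Spec}\,B$. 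Finally (4) is the special case of (3) in which $B$ is a domain: then $H^0(B)=\Zb=H^0(A)$, so $H^0(B)/H^0(A)=0$ and $M\mathcal I(A,B)=0$; equivalently, $M\Pic A=0$ makes $A$ quasinormal in $B$ and one concludes by Lemma \ref{2.5}(1).

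The main obstacle is the vanishing $L\Pic A=0$ for a Hensel local (seminormal) ring $A$ underlying (3) and (4); the Hensel hypothesis is genuinely needed, since for the local ring of an ordinary node that has \emph{not} been Henselised a Mayer--Vietoris computation gives $L\Pic A\cong\Zb\neq0$. I would establish the vanishing through the conductor square of $A$ in a finite birational (normal, or seminormal) extension, base-changed along the flat map $-\otimes\Zb[X,X^{-1}]$: applying the units--Picard Mayer--Vietoris sequence and using, as in Proposition \ref{3.6}, that finite algebras over the Hensel local ring $A$ split as finite products of local rings, one finds that the connecting map on unit groups becomes surjective, so the obstruction to triviality of an invertible module over $A[X,X^{-1}]$ dies. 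Checking that the normal local factors contribute nothing to $\Pic(\,\cdot\,[X,X^{-1}])$ --- immediate in the regular case, and the technical heart in general --- is the step I expect to require the most care.
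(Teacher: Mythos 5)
Your treatment of (1) and (2), and the overall structure of (3) and (4), coincide with the paper's own proof. For (1) the paper likewise passes from the polynomial case (Corollary 1.6 of \cite{ss}) to subintegral closedness of $A[X,X^{-1}]$ in $B[X,X^{-1}]$ and then applies Lemma \ref{2.1} twice to compare cokernels of the two Laurent inclusions. For (2)--(4) the paper also works with the exact portion of (1.0), identifies $MU$ with $H^{0}$ (reducedness, Remark \ref{LU1}) and $M\Pic A$ with $L\Pic A$ (seminormality of $A$, Remark \ref{we1}), and quotes Corollary 2.3.1 of \cite{wei} for torsion-freeness of $L\Pic A$. Your only deviations there are harmless and self-contained: you prove torsion-freeness, resp.\ freeness, of $\operatorname{coker}\bigl(H^{0}(A)\to H^{0}(B)\bigr)$ by hand (the division argument needs the small fix of composing $h$ with the floor function $\lfloor\,\cdot\,/n\rfloor$, since $h$ need not take values in $n\Zb$ off the image of $\operatorname{Spec} B$; and N\"obeling's theorem plus the evaluation splitting in (3)), where the paper simply cites Proposition 1.3 of \cite{wei}. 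Your parenthetical alternative in (2) --- that $X\mapsto X^{n}$ acts as multiplication by $n$ on $M\mathcal I(A,B)$ and is injective by descent --- is unjustified as stated (injectivity on $\mathcal I(A[X,X^{-1}],B[X,X^{-1}])$ does not pass to the cokernel), but it carries no weight in your argument.

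The one genuine gap is the step you yourself single out as the main obstacle: the vanishing $L\Pic A=0$ for a Hensel local ring $A$, which drives both (3) and (4). This is not something that needs a new proof; it is precisely Theorem 2.5 of Weibel \cite{wei}, and that citation is exactly what the paper uses at this point, so the hole closes immediately. Your proposed substitute --- a conductor-square Mayer--Vietoris argument after base change to Laurent polynomials --- is, as you partly acknowledge, incomplete in two ways: it presupposes a finite normal or seminormal birational overring, for which no Noetherian, excellence, or even domain hypothesis is available ($A$ here is an arbitrary Hensel local ring, and in (3) it need not be reduced beyond what seminormality of $B$ forces); and the assertion that the resulting normal local factors contribute nothing to $\Pic(\,\cdot\,[X,X^{-1}])$ is itself a nontrivial claim of essentially the same nature as the theorem being proved. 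Replace that sketch by the citation of Weibel's Theorem 2.5 and your proof of all four parts is correct and essentially identical to the paper's.
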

 
 \begin{proof}
 
 (1) Since $A$ is subintegrally closed in $B$,  $A[X]$  is subintegrally closed in $B[X]$ by Corollary 1.6 of \cite{ss}. Therefore $A[X, X^{-1}]$ is subintegrally closed in $B[X, X^{-1}]$. We have the commutative diagram
 \footnotesize
 $$\begin{CD}
 1    @>>> \mathcal I(A,B )    @>>> \mathcal I(A[X,X^{-1}], B[X,X^{-1}])     @>>> M\mathcal I(A, B) @>>> 1    \\
 @.                         @VV\psi V                   @VV\xi V                 @VVV                      @.\\
 1    @>>>   \mathcal I(A[T], B[T])    @>>> \mathcal I(A[T][X, X^{-1}], B[T][X,X^{-1}])     @>>> M\mathcal I(A[T], B[T]) @>>> 1      
 \end{CD}$$
 \normalsize
  
 where $\psi$ and $\xi$ are isomorphisms by Lemma \ref{2.1}. Hence we get the result.

 (2)  As $A$ is subintegrally closed in $B$ and $B$ is a seminormal ring,  $A$ is seminormal. Then by Remark \ref{we1}, $L\Pic A \cong  M\Pic A $. Since any seminormal ring is reduced, $MU(A)= H^{0}(A)  $ and  $MU(B)= H^{0}(B)$ by Remark \ref{LU1}. Now, from (1.0), we have the exact sequence 
   $$ 
  1\rightarrow  H^{0}(A) \rightarrow H^{0}(B) \rightarrow  M\mathcal  I(A, B) \rightarrow  M\Pic A  $$
  
  \noindent where $ M\Pic A$ is a torsion-free abelian group by Corollary 2.3.1 of \cite{wei}. Let $T$ be the cokernel of the map $H^{0}(A)\rightarrow H^{0}(B)$. Then $$ 1 \rightarrow T \rightarrow  M\mathcal  I(A, B) \rightarrow  M\Pic A  $$ is exact and $T$ is a free abelian group by Proposition 1.3 of \cite{wei}. Therefore $M\mathcal  I(A, B)$ is a torsion-free abelian group.  \
  
  (3)By  Theorem 2.5 of \cite{wei}, $L\Pic A= 0$.  Since $A $ is seminormal, $M\Pic A = 0$. Then we have the exact sequence 
  $$1\rightarrow  H^{0}(A) \rightarrow H^{0}(B) \rightarrow  M\mathcal  I(A, B) \rightarrow  1  $$\\
  and $M\mathcal I(A, B)= \rm{Coker}[H^{0}(A) \rightarrow H^{0}(B)]$  is a free abelian group by Proposition 1.3 of \cite{wei}. \
  
  (4)  Since $B$ is a domain,  $H^{0}(A)= H^{0}(B)\cong \mathbb{Z}$. So $M\mathcal I(A, B)=0$.
 \end{proof}

  \begin{lemma}\label{LP}
   Let $A\subseteq B$ be a subintegral extension. Then the map $L\Pic A \rightarrow L\Pic B $ is surjective.
  \end{lemma}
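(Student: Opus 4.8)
The plan is to realize $L\Pic$ as a cokernel and to reduce the claim to the surjectivity of an ordinary Picard-group map over the Laurent polynomial rings. By definition, $L\Pic A$ is the cokernel of the additive map $\Pic A[X]\times \Pic A[X^{-1}]\to \Pic A[X, X^{-1}]$, so we have a right-exact sequence
$$
\Pic A[X]\times \Pic A[X^{-1}]\longrightarrow \Pic A[X, X^{-1}]\longrightarrow L\Pic A\longrightarrow 0,
$$
and the analogous sequence for $B$. All three maps are natural in the ring, so the inclusion $A\subseteq B$ induces a commutative ladder between these two right-exact sequences, with vertical arrows the functorial maps on Picard groups and the rightmost vertical arrow the map $L\Pic A\to L\Pic B$ we wish to show is surjective.

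First I would check that $A[X, X^{-1}]\subseteq B[X, X^{-1}]$ is again subintegral. This is exactly the fact already invoked in the proof of Proposition \ref{3.7}: an elementary subintegral generator $b$ with $b^{2},b^{3}\in A$ remains an elementary subintegral generator over $A[X, X^{-1}]$, since $b^{2}, b^{3}\in A\subseteq A[X, X^{-1}]$, and the filtered-union condition defining subintegrality is inherited termwise. Applying Lemma \ref{2.4}(1) to this subintegral extension then shows that the middle vertical arrow $\Pic A[X, X^{-1}]\to \Pic B[X, X^{-1}]$ is surjective.

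It then remains to run a short diagram chase on the ladder above. Given a class in $L\Pic B$, lift it to an element of $\Pic B[X, X^{-1}]$ by right-exactness of the bottom row, then lift that element back to $\Pic A[X, X^{-1}]$ using the surjectivity of the middle vertical map; commutativity of the diagram shows that the image of this element in $L\Pic A$ maps to the original class. Hence $L\Pic A\to L\Pic B$ is surjective. I expect no genuine obstacle here: once the subintegrality of the Laurent extension is recorded, the entire statement collapses to the surjectivity furnished by Lemma \ref{2.4}(1) together with a purely formal chase, and the only point that deserves an explicit line of justification is the preservation of subintegrality under passage from $A\subseteq B$ to $A[X, X^{-1}]\subseteq B[X, X^{-1}]$.
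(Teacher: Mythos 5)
Your proof is correct and follows essentially the same route as the paper: both pass to the subintegral extension $A[X,X^{-1}]\subseteq B[X,X^{-1}]$, invoke Lemma \ref{2.4}(1) for surjectivity on $\Pic$, and finish with a chase on the cokernel ladder defining $L\Pic$. The only (harmless) difference is that the paper also records surjectivity of $\Pic A[X]\times\Pic A[X^{-1}]\rightarrow\Pic B[X]\times\Pic B[X^{-1}]$, which, as your chase shows, is not actually needed.
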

  
  \begin{proof}
   Since $A\subseteq B$ is subintegral, so are $A[X]\subseteq B[X]$ and $A[X, X^{-1}]\subseteq B[X, X^{-1}]$. Then the maps $\Pic A[X] \times \Pic A[X^{-1}]\rightarrow \Pic B[X] \times \Pic B[X^{-1}]$ and $\Pic A[X, X^{-1}]\rightarrow \Pic B[X, X^{-1}]$ are surjective by Lemma \ref{2.4}(1). Hence we get the result by chasing the following commutative diagram 
   $$\xymatrix{\Pic A[X] \times \Pic A[X^{-1}] \ar[r] \ar[d] & \Pic A[X, X^{-1}] \ar[r] \ar[d] & L\Pic A \ar[r] \ar[d] & 1 \\
   \Pic B[X] \times \Pic B[X^{-1}] \ar[r] \ar[d] & \Pic B[X, X^{-1}] \ar[r] \ar[d] & L\Pic B \ar[r]  & 1 \\
     1 & 1   &  \\}$$
  \end{proof}

  \begin{theorem}\label{LP1}
   Let $A\subseteq B$ be a ring extension with $A$ Hensel local and $B$ seminormal. Then $M\mathcal I(A, B)\cong M\mathcal I(A,^{^+}\!\!\!A)\oplus M\mathcal I(^{^+}\!\!\!A, B)$, where $^{^+}\!\!\!A$ is the subintegral closure of $A$ in $B$. 
  \end{theorem}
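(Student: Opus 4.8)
The plan is to read the desired isomorphism as a splitting of the short exact sequence furnished by Corollary \ref{Lnew}. Applying that corollary to $A\subseteq B$ produces the exact sequence
$$1\rightarrow M\mathcal I(A,{}^{^+}\!\!\!A)\rightarrow M\mathcal I(A,B)\rightarrow M\mathcal I({}^{^+}\!\!\!A,B)\rightarrow 1.$$
Since any short exact sequence of abelian groups whose cokernel is free (hence projective) splits, it suffices to prove that $M\mathcal I({}^{^+}\!\!\!A,B)$ is a free abelian group; the claimed direct sum decomposition then follows immediately. So the whole theorem is reduced to establishing freeness of the right-hand term.

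To obtain that freeness I would invoke Proposition \ref{4.8}(3), now applied to the extension ${}^{^+}\!\!\!A\subseteq B$ instead of $A\subseteq B$. Its hypotheses demand three things: that ${}^{^+}\!\!\!A$ be subintegrally closed in $B$, that $B$ be seminormal, and that ${}^{^+}\!\!\!A$ be Hensel local. The first is exactly the defining property of the subintegral closure, and the second is assumed in the theorem. Thus everything hinges on the remaining point, which I expect to be the main obstacle: the subintegral closure ${}^{^+}\!\!\!A$ is again Hensel local.

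To prove this key lemma I would argue in two stages. First, ${}^{^+}\!\!\!A$ is local: the extension $A\subseteq{}^{^+}\!\!\!A$ is subintegral, hence integral and inducing a bijection on prime spectra with trivial residue field extensions, so the unique maximal ideal of the local ring $A$ lifts to the unique maximal ideal of ${}^{^+}\!\!\!A$. Second, ${}^{^+}\!\!\!A$ is Henselian because it is local and integral over the Henselian local ring $A$. Concretely, given a monic $f\in{}^{^+}\!\!\!A[T]$ together with a coprime monic factorization of its reduction over the common residue field, the finitely many coefficients of $f$ generate a finite $A$-subalgebra $A'\subseteq{}^{^+}\!\!\!A$. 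Because $A$ is Hensel local, $A'$ is a finite product of Henselian local rings; but $A'$ injects into the local ring ${}^{^+}\!\!\!A$, so its idempotents are trivial and $A'$ is a single Henselian local ring, with residue field that of $A$ since $A\subseteq A'$ is again subintegral. Lifting the factorization over $A'$ and pushing it into ${}^{^+}\!\!\!A[T]$ shows that ${}^{^+}\!\!\!A$ satisfies Hensel's lemma, hence is Hensel local.

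With this lemma in hand, Proposition \ref{4.8}(3) applies to ${}^{^+}\!\!\!A\subseteq B$ and shows that $M\mathcal I({}^{^+}\!\!\!A,B)$ is a free abelian group. The short exact sequence from Corollary \ref{Lnew} therefore splits, yielding $M\mathcal I(A,B)\cong M\mathcal I(A,{}^{^+}\!\!\!A)\oplus M\mathcal I({}^{^+}\!\!\!A,B)$, as desired. The only genuinely delicate step is the passage from the finite case of Henselianity, where the defining property of the Hensel local ring $A$ is directly available, to the possibly non-finite integral extension $A\subseteq{}^{^+}\!\!\!A$; this is precisely what the reduction to the finitely generated subalgebra $A'$ is designed to handle.
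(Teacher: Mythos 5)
Your proof is correct, and it shares the paper's skeleton---both apply Corollary \ref{Lnew} to obtain the short exact sequence and split it by showing the quotient $M\mathcal I({}^{^+}\!\!\!A, B)$ is free abelian (free implies projective, so the surjection splits)---but your route to that freeness is genuinely different. The paper never proves, or even claims, that ${}^{^+}\!\!\!A$ is Hensel local. Instead it uses Lemma \ref{LP}: since $A\subseteq {}^{^+}\!\!\!A$ is subintegral, $L\Pic A \to L\Pic {}^{^+}\!\!\!A$ is surjective, and $L\Pic A = 0$ by Weibel's Theorem 2.5 because $A$ is Hensel local; hence $L\Pic {}^{^+}\!\!\!A = 0$, and since ${}^{^+}\!\!\!A$ is seminormal (being subintegrally closed in the seminormal ring $B$), Remark \ref{we1} gives $M\Pic {}^{^+}\!\!\!A = 0$. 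This weaker vanishing suffices to rerun the \emph{argument} of Proposition \ref{4.8}(3)---not the statement itself---and conclude that $M\mathcal I({}^{^+}\!\!\!A,B) \cong \mathrm{Coker}[H^{0}({}^{^+}\!\!\!A)\to H^{0}(B)]$ is free. You instead prove the stronger commutative-algebra fact that ${}^{^+}\!\!\!A$ is itself Hensel local, so that Proposition \ref{4.8}(3) applies verbatim. Your lemma is true and your sketch is the standard proof (locality via the Spec bijection of subintegral extensions; Henselianity by reduction to the finite subalgebra $A'$, which is local because it has no nontrivial idempotents inside the local ring ${}^{^+}\!\!\!A$, and is Hensel because it is finite over $A$). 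But note that it leans on two facts not developed in the paper: Swan's characterization of subintegral extensions as integral extensions inducing a bijection on Spec with trivial residue field extensions, and the equivalence of the paper's definition of Hensel local (every finite algebra is a product of local rings) with the lifting property for coprime factorizations of monic polynomials, which your argument uses in both directions. What each approach buys: yours isolates a reusable structural lemma (the subintegral closure of a Hensel local ring is Hensel local) and turns the freeness step into a citation; the paper's stays entirely within its own toolkit (its Lemma \ref{LP} plus results of Weibel already invoked), needs only $M\Pic {}^{^+}\!\!\!A = 0$ rather than Henselianity, and indeed Lemma \ref{LP} appears in the paper precisely to make this detour around your lemma possible.
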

  
  \begin{proof}
   By Lemma \ref{LP}, $L\Pic A \rightarrow L\Pic ^{^+}\!\!\!A $ is surjective. Since $A$ is Hensel local,  $L\Pic A= 0$ by Theorem 2.5 of \cite{wei}. Therefore $L\Pic ^{^+}\!\!\!A = 0$ and $M\Pic ^{^+}\!\!\!A =0$ because $^{^+}\!\!\!A$ is seminormal. Then by the same argument as Proposition \ref{4.8}(3), $ M\mathcal I(^{^+}\!\!\!A, B)$ is a free abelian group. Now the result follows from the following exact sequence (Corollary \ref{Lnew})
   $$ 1\rightarrow M\mathcal I(A, ^{^+}\!\!\!A) \rightarrow M\mathcal I(A, B) \rightarrow M\mathcal I(^{^+}\!\!\!A, B)\rightarrow 1$$
  \end{proof}

\textbf{Acknowledgement}: I would like to express my sincere thanks to Prof. Balwant Singh for the many fruitful discussions and his guidance. I would also like to thank the referee for making many comments which have improved the exposition. Further, I would like to thank CSIR, India for financial support.

\end{document}